\newcommand{\subjclass}[2][1991]{%
  \let\@oldtitle\@title%
  \gdef\@title{\@oldtitle\footnotetext{#1 \emph{numbers\thinspace:} #2}}%
}
\newcommand{\keywords}[1]{%
  \let\@@oldtitle\@title%
  \gdef\@title{\@@oldtitle\footnotetext{\emph{Key words and phrases.} #1.}}%
}
\DeclareMathOperator{\dm}{\overline{\mathcal{M}}}
\DeclareMathOperator{\m}{\mathcal{M}}
\DeclareMathOperator{\uc}{\mathcal{C}}
\DeclareMathOperator{\scr}{\overline{\mathcal{C}}}
\DeclareMathOperator{\sdiv}{\partial \dm}
\DeclareMathOperator{\mo}{\text{Mod}}
\DeclareMathOperator{\xreg}{X_{\text{reg.}}}
\DeclareMathOperator{\xsing}{X_{\text{sing.}}}
\DeclareMathOperator{\sreg}{S_{\text{reg.}}}
\DeclareMathOperator{\ssing}{S_{\text{sing.}}}
\theoremstyle{plain}
\newtheorem{lem}{Lemma}[section]
\newtheorem{prop}{Proposition}[section]
\newtheorem{thm}{Theorem}[section]
\newtheorem*{thm*}{Theorem}
\newtheorem{theorem}{Theorem}
\theoremstyle{remark}
\newtheorem*{rem}{Remark}
\theoremstyle{definition}
\newtheorem{defin}{Definition}[section]
\title{Achiral Lefschetz fibrations and the moduli space of curves}
\author{Sardor Yakupov}
\date{\today}
\keywords{Achiral Lefschetz fibration, moduli space, Ricci flow, signature}
\subjclass[2020 MSC]{primary\thinspace: 57K41, secondary\thinspace: 53E20, 14H15, 57K20}
\begin{document}

\maketitle
\begin{abstract}
Symplectic Lefschetz fibrations can be described via classifying maps with values in the Deligne-Mumford compactification of the moduli space of curves, by means of constructions relying on symplectic geometry. In this note we prove the existence of classifying maps for achiral Lefschetz fibrations using Riemannian geometry. We further extend the Smith signature formula to the achiral case, providing a more elementary proof to this statement.
\end{abstract}
\tableofcontents
\newpage
\section{Introduction}
Achiral Lefschetz fibrations constitute a powerful tool in the study of 4-manifolds and form a natural generalization of Lefschetz fibrations, which were in turn inspired by analytic families of curves. \par 
We denote by $\m_g$ the moduli space of smooth algebraic curves of genus $g$ and its compactification by $\dm_g$. It admits the universal curve bundle $\scr_g$ with fiber above a point $x\in \dm_g$ given by the curve with the isomorphism class represented by $x$. It exhibits a universal property for analytic families of stable curves\thinspace: to a family $C\to S$ (satisfying minor additional conditions) is associated a natural map $\phi:C\to \dm_g$, such that the family $C$ is isomorphic to the pullback of $\scr_g$ via $\phi$.\par 
In 1999 Smith described (\cite{Smith1}), using symplectic geometry, how to construct a classifying map $\phi: S\to \dm_g$ associated to a symplectic Lefschetz fibration $f:X\to S$, where $X$ and $S$ are closed connected oriented manifolds of dimension 4 and 2 respectively. Moreover, he has shown that the signature $\sigma(X)$ can be calculated in terms of this classifying map. \par 
In this article we present a generalization of this construction for the case of achiral Lefschetz fibrations\thinspace:
\begin{theorem}[see Theorem \ref{thm:main1}]
    Let $f:X\to S$ be an achiral Lefschetz fibration of fiber genus $g\geq 4$. Then there exists a classifying map $\phi_f: S\to \dm_g$, which is unique up to isotopy preserving the geometric intersection number between $S$ and $\sdiv_g$.
\end{theorem}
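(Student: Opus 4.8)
The plan is to obtain $\phi_f$ from an auxiliary Riemannian metric on $X$, exploiting that on an oriented surface a conformal class \emph{is} a complex structure, and to control the behaviour of $\phi_f$ over the critical values by a local analysis of the hyperbolic metrics on the pinching necks. Fix a Riemannian metric $g_X$ on $X$; let $\Delta=\{s_1,\dots,s_k\}\subset S$ be the set of critical values of $f$, and — after a standard perturbation of $f$ — assume each $s_i$ carries a single critical point $p_i$. Over $S^\ast=S\setminus\Delta$ the map $f$ is a smooth $\Sigma_g$-bundle, so $g_X$ restricts to a metric on each fibre $F_s=f^{-1}(s)$; since $g\geq4\geq2$ this metric has negative total curvature, and I would run the normalized Ricci flow on $(F_s,g_X|_{F_s})$ and invoke its convergence (Hamilton) to the unique hyperbolic metric $h_s$ in the conformal class of $g_X|_{F_s}$. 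The Riemann surface $(F_s,[h_s])$ then defines a point $\phi_f(s)\in\m_g$; since $g_X|_{F_s}$ varies smoothly with $s$ and uniformization depends smoothly on the conformal datum, $\phi_f|_{S^\ast}$ is smooth, and its monodromy in $\mo(\Sigma_g)$ is by construction the monodromy of $f|_{S^\ast}$.

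The crux will be the extension across $\Delta$ and the intersection with $\sdiv_g$. Near $p_i$ write $f$ in its normal form: $(z_1,z_2)\mapsto z_1z_2$ if $p_i$ is a positive point, $(z_1,z_2)\mapsto z_1\bar z_2$ if $p_i$ is achiral, with values in a complex coordinate $z$ on a disk $D_i\ni s_i$. In both cases the fibre $F_s$ $(s\in D_i\setminus\{s_i\})$ contains a standard annulus $A_s$ — the intersection of the local fibre $\{z_1z_2=z\}$ (resp. $\{z_1\bar z_2=z\}$) with a small ball — whose conformal modulus grows like $-\log|z|$, and $F_{s_i}$ is the nodal surface obtained by collapsing the core curve $c_i$ of $A_s$, which is exactly the vanishing cycle. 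We may assume $c_i$ is essential (an inessential vanishing cycle is a cancellable singularity), so this nodal curve is stable. The hard part is to prove that as $s\to s_i$ the metrics $h_s$ develop precisely one collapsing closed geodesic, located in $A_s$, with the complement converging — smoothly on compact subsets — to the complete finite-area hyperbolic metric on the normalization of $F_{s_i}$; this is the standard hyperbolic degeneration picture (collar lemma, thick–thin decomposition), but one must pin it to the normal form above to see that exactly one neck pinches, and at the rate dictated by $z$. Granting this, $\phi_f(s)\to[F_{s_i}]\in\sdiv_g$, and setting $\phi_f(s_i)=[F_{s_i}]$ makes $\phi_f\colon S\to\dm_g$ continuous. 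The same local picture identifies, to leading order, the plumbing parameter of $\phi_f$ along $D_i$ with $z$ when $p_i$ is positive and with $\bar z$ when $p_i$ is achiral; hence $\phi_f|_{D_i}$ meets $\sdiv_g$ only at $s_i$, transversally, with local intersection number $1$ — positively for a positive point, negatively for an achiral point, but of unsigned intersection number $1$ in either case. (This is also where the Smith signature formula acquires its sign corrections for achiral points.)

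That $\phi_f$ is a classifying map, i.e. that $\phi_f^\ast\scr_g$ is isomorphic over $S$ to $X$ as a family of nodal curves compatibly with $f$, would then follow by checking it over $S^\ast$, where both sides are $\Sigma_g$-bundles with the same monodromy, and over each $D_i$, where both carry the (achiral) Lefschetz normal form, and gluing. For the uniqueness statement, the sole choice was $g_X$, and the space of Riemannian metrics on $X$ is convex; given two choices I would join them by a path $g_X^t$ and note that every step above depends continuously on $t$ — the uniformization does, and the local analysis near each $p_i$ is uniform in $t$ because the normal forms are metric-independent. The resulting map $S\times[0,1]\to\dm_g$ is the asserted isotopy; along it the intersection with $\sdiv_g$ remains supported on $\Delta$ and transverse of local multiplicity $1$ at each $s_i$, so the geometric intersection number — which equals the number of critical points of $f$ — is unchanged. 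I expect the degeneration analysis of the second paragraph to be the main obstacle; relative to Smith's symplectic construction the only new feature is that at an achiral point the plumbing parameter is anti-holomorphic in the base coordinate, which flips the sign but not the size of the boundary intersection, so neither continuity of $\phi_f$ nor the geometric intersection count is affected.
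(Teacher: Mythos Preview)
Your overall strategy is sound and close in spirit to the paper's, but it diverges at the treatment of the singularities and glosses over one technical point that the paper in fact proves.

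On the singularities: you start from an arbitrary metric $g_X$ and then attempt a direct degeneration analysis of the uniformized metrics $h_s$ as $s\to s_i$, identifying the plumbing parameter with $z$ or $\bar z$ to leading order. The paper runs the construction in the opposite direction. It first \emph{prescribes} the metric on a neighbourhood $f^{-1}(D_i)$ of each singular fibre to be a \emph{slicing metric}: a rescaling of the pullback of the Weil--Petersson metric from a small embedded disk in $\dm_g$ meeting $\sdiv_g$ transversally at one point with the correct boundary monodromy. By construction this metric already has constant fibre curvature $-1$ and defines a smooth map $D_i\to\dm_g$. One then extends (Lemma~\ref{slicing}) to a global metric on $X$ by a section-extension argument, and runs the normalized fibered Ricci flow only over the compact regular part $\sreg$. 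Because the slicing metrics are already hyperbolic, the flow is stationary on the overlap $\sreg\cap\ssing$, so the two pieces glue smoothly with no further work. This completely sidesteps the degeneration analysis you correctly flag as ``the main obstacle''; transversality with $\sdiv_g$ is built in rather than recovered afterwards, and smoothness across $\Delta$ is automatic rather than requiring control of higher-order terms in the plumbing expansion.

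On smooth dependence: your sentence ``uniformization depends smoothly on the conformal datum'' is essentially the content of the paper's Theorem~\ref{NFRF}, not a fact one can invoke. Fibrewise convergence of Hamilton's flow does not by itself produce a \emph{smooth} family of limits $h_s$; the paper devotes several lemmas to establishing uniform-in-$p$ exponential bounds on $K_{g_{t,p}}-k$ and all its covariant derivatives, via the maximum principle combined with Schauder and Sobolev estimates and the continuous dependence of the first Laplace eigenvalue on the base parameter. Without these, $\phi_f|_{S^\ast}$ is defined pointwise but is not shown to be smooth.

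What your route would buy, were the degeneration analysis carried out in full, is a more direct link between the chirality of each critical point and the local holomorphy or anti-holomorphy of $\phi_f$; the paper obtains the same conclusion more cheaply but less explicitly by choosing the slicing disk in $\dm_g$ at the outset.
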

Since the symplectic approach fails in the achiral case, we prove this result using Riemannian geometry. We start by prescribing the metric to ensure the smoothness of $\phi_f$ around the singular locus, and then proceed by extending and uniformizing the metrics on each fiber $F_x= f^{-1}(x)$. The tool we introduce for this is the \emph{normalized fibered Ricci flow} (NFRF), which is a straightforward generalization of the Ricci flow to the fiber bundle case. We prove a convergence result similar to that of the surface case treated in \cite{Ham1}\thinspace:
\begin{theorem}[see Theorem \ref{NFRF}]
    Let $f:X\to S$ be a bundle with fibers being closed surfaces of genus $g\geq 2$ over a compact base $S$,
    equipped with an arbitrary smooth family of Riemannian metrics $g_{0,p}$. Then there exists a unique solution $(g_{t,p})_{t\in[0;+\infty)}$ of the fibered normalized Ricci flow that converges to a family $g_{\infty,s}$ of Riemannian metrics of constant curvature.
\end{theorem}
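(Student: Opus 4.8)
The plan is to reduce the fibered flow to a smoothly varying family of scalar parabolic equations on the fibers, carry out Hamilton's program for the Ricci flow on surfaces of genus $\geq 2$ fiberwise, and then upgrade all of Hamilton's a priori estimates so that they are uniform over the compact base $S$; uniformity yields global existence, while smooth dependence on the base parameter yields the smooth limiting family.

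First I would work locally on the base. Since $f$ is a locally trivial fibration, over a chart $U\subseteq S$ we may identify $f^{-1}(U)\cong U\times\Sigma_g$ and regard the data as a smooth map $p\mapsto g_{0,p}$ from $U$ into the space of Riemannian metrics on the fixed closed surface $\Sigma_g$. The normalized Ricci flow preserves the conformal class and the total area of each fiber, so writing $g_{t,p}=e^{2u_{t,p}}g_{0,p}$ the equation becomes, for each $p$, a scalar quasilinear parabolic equation of the form $\partial_t u = e^{-2u}\bigl(\Delta_{g_{0,p}}u - K_{g_{0,p}}\bigr) + \rho_p$, where $\rho_p$ is the ($p$-dependent) average Gauss curvature of $g_{0,p}$, which is negative since $\chi(\Sigma_g)<0$, and all coefficients depend smoothly on $p$. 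Standard parabolic theory (linearization together with maximal regularity, or the implicit function theorem in parabolic Hölder spaces) gives for each $p$ a unique maximal smooth solution, and — because the coefficients and initial data vary smoothly in $p$ and $\bar U$ is compact — a common short-time interval $[0,\varepsilon]$ on which the solution exists and depends smoothly on $(t,p)$ jointly. Fiberwise uniqueness forces the local solutions to patch into a global smooth family $(g_{t,p})$.

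Next comes global existence and convergence. On a fixed fiber this is precisely Hamilton's theorem on the Ricci flow on surfaces in the case $\chi<0$: the flow exists for all time and converges exponentially in $C^\infty$ to the unique constant-curvature metric in the conformal class of $g_{0,p}$ with area $\operatorname{Area}(g_{0,p})$. The point is that all of Hamilton's estimates — the bound on the curvature, the entropy and Harnack inequalities, and the resulting exponential decay of $K_{t,p}-\rho_p$ and of all its covariant derivatives — depend only on bounds for the initial metric and finitely many of its derivatives together with a lower bound on its injectivity radius. As $p$ ranges over the compact set $\bar U$, the family $\{g_{0,p}\}$ is uniformly bounded in every $C^k$ and uniformly non-degenerate, so every constant appearing in these estimates can be chosen independently of $p$. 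Hence the solution exists on $[0,\infty)$ for all $p$, stays in a fixed precompact set of metrics, and for every $k$ there are constants $C_k,\lambda>0$ with $\|g_{t,p}-g_{\infty,p}\|_{C^k(g_{0,p})}\leq C_k e^{-\lambda t}$ uniformly in $p$.

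Finally, for the smoothness of the limit I would differentiate the flow equation in the base variable $p$: the $p$-derivatives of $u_{t,p}$ satisfy a smoothly parametrized linear parabolic system whose coefficients are uniformly controlled (by the precompactness just established), so the same kind of uniform exponential estimates hold for them. This shows that $g_{t,p}\to g_{\infty,p}$ in $C^\infty$ uniformly in $p$ and that $p\mapsto g_{\infty,p}$ is smooth; gluing the local pictures over the charts of $S$ produces the global smooth family $g_{\infty,s}$ of constant-curvature metrics, and uniqueness of the whole solution follows from fiberwise uniqueness of the normalized Ricci flow. I expect the main obstacle to be purely technical: making Hamilton's constants manifestly uniform in the parameter and controlling the base-derivatives of the flow well enough to promote ``fiberwise convergence'' to ``convergence of smooth families,'' rather than any new geometric phenomenon, since the fibers evolve independently and the only coupling between them is the smooth dependence on $S$.
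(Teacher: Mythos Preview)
Your proposal is correct and takes essentially the same approach as the paper: run Hamilton's $\chi<0$ surface argument fiberwise and use compactness of $S$ to make all the maximum-principle constants uniform in $p$, then conclude smooth convergence of the family. The paper is simply more explicit---it redoes the curvature evolution and the potential-function trick ($H_p = K-k + 2|\nabla\Phi_p|^2$ with $\Delta\Phi_p = k-K$) and bounds $H_p(0)$ uniformly over $S$ by chaining Schauder, Sobolev, and first-eigenvalue estimates, whereas you invoke Hamilton's constants as a package; note also that the entropy and Harnack inequalities you mention belong to the $\chi\geq 0$ case and play no role here.
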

Finally, in section \ref{section:signature} we explain how the Smith signature formula, slightly modified, generalizes to the achiral case\thinspace:
\begin{theorem}[see Theorem \ref{thm:main2}]
    Let $f:X\to S$ be an achiral Lefschetz fibration of fiber genus $g\geq 4$ whose classifying map is $\phi_f : X\to S$. Then, the signature $\sigma(X)$ of $X$ can be expressed as\thinspace:
    $$\sigma(X) = \langle \hat\sigma, [\phi_f]\rangle$$
    where $\hat{\sigma} = 4c_1(\lambda)-PD(\sdiv_g)\in H^2(\dm_g,\mathbb{Q})$, $c_1(\lambda)$ is the first Chern class of the Hodge bundle $\lambda$ and $PD$ is the Poincar\'e dual of the homology class of singular divisors.
\end{theorem}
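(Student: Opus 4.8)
The plan is to reduce the computation of $\sigma(X)$ to an integral over the base $S$ via the classifying map $\phi_f$, following Smith's strategy but being careful about the two kinds of singular fibers (those with the complex orientation and those with the reversed orientation). First I would recall the topological decomposition of an achiral Lefschetz fibration: away from the critical values $\{p_1,\dots,p_n\}\subset S$, the map $f$ restricts to a genuine surface bundle, and near each $p_i$ the fiber $F_{p_i}$ acquires a single node. Write $n = n_+ + n_-$, where $n_+$ counts the Lefschetz critical points (modeled on $(z,w)\mapsto zw$ with the ambient orientation) and $n_-$ the achiral ones (modeled on $(z,w)\mapsto z\bar w$, equivalently $\bar z w$). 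The key local input is that a positive node contributes $+1$ and a negative node contributes $-1$ to the relevant count, reflecting the orientation reversal; this sign is exactly what distinguishes the achiral case and is the reason $PD(\sdiv_g)$ must be evaluated with signed multiplicities along $\phi_f$, i.e. $\langle PD(\sdiv_g),[\phi_f]\rangle = n_+ - n_-$ once $\phi_f$ is made transverse to $\sdiv_g$.

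Next I would set up the Hodge-theoretic term. Over $S\setminus\{p_i\}$ the surface bundle carries a flat $\mathbb{Q}$-vector bundle $\mathbb{H}^1$ with the intersection form on the fibers; the associated local system together with its Hodge filtration pulls back the Hodge bundle $\lambda$ on $\dm_g$. The plan is to invoke the Meyer signature cocycle / Atiyah's approach: the signature of the total space of a surface bundle over a surface (with boundary, after removing disks around the $p_i$) equals $\langle 4c_1(\lambda), \cdot\rangle$ evaluated against the base, where $c_1(\lambda)$ is represented by the curvature of the Hodge bundle with its natural $L^2$ (Hodge) metric — this is Smith's computation in the honest Lefschetz case and it is insensitive to chirality because it only uses the surface-bundle structure on the complement of the critical values. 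Concretely, $\sigma(X) = \sigma(X^\circ) + \sum_i \sigma(\nu_i)$, where $X^\circ$ is $X$ minus neighborhoods of the singular fibers and $\nu_i$ are those neighborhoods; Novikov additivity holds because the separating hypersurfaces are $3$-manifolds and the correction terms (Wall non-additivity / Maslov terms) vanish here for the same reason as in \cite{Smith1}, the gluing being along mapping tori of Dehn twists.

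Then I would compute the local contributions $\sigma(\nu_i)$. For a positive node, $\nu_i$ is a neighborhood of a nodal fiber whose monodromy is a single right-handed Dehn twist, and a direct computation (Smith's, or via the plumbing/Hopf-band description) gives $\sigma(\nu_i) = -1$; for a negative node, orientation reversal flips this to $\sigma(\nu_i) = +1$. Summing, $\sum_i \sigma(\nu_i) = -(n_+ - n_-) = -\langle PD(\sdiv_g),[\phi_f]\rangle$. Combining with $\sigma(X^\circ) = 4\langle c_1(\lambda),[\phi_f]\rangle$ — using that $\phi_f$ extends across the $p_i$ into $\dm_g$ and that the difference between integrating $c_1(\lambda)$ over $S$ versus over $S\setminus\{p_i\}$ is accounted for by the local model, exactly as in the holomorphic case — yields $\sigma(X) = \langle 4c_1(\lambda) - PD(\sdiv_g),[\phi_f]\rangle = \langle\hat\sigma,[\phi_f]\rangle$, as claimed. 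Here I am using the existence and essential uniqueness of $\phi_f$ from Theorem \ref{thm:main1}, so that $[\phi_f]\in H_2(\dm_g;\mathbb{Q})$ (equivalently the pairing $\langle\hat\sigma,[\phi_f]\rangle$) is well-defined independently of the choices of metric and of the NFRF limit; the isotopy in Theorem \ref{thm:main1} preserves the geometric intersection number with $\sdiv_g$, which is precisely what is needed for the $PD(\sdiv_g)$-term to be invariant.

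The main obstacle I anticipate is the bookkeeping of orientations and signs at the negative nodes: one must check carefully that reversing the ambient orientation in the local model $z\bar w$ flips \emph{both} the sign of the local signature contribution \emph{and} the local intersection number of $\phi_f$ with $\sdiv_g$ in a compatible way, so that the two sign changes reinforce rather than cancel — this is the whole content of why the \emph{same} formula (with $\hat\sigma$ unchanged) holds in the achiral case. A secondary technical point is justifying that the $L^2$-metric on $\lambda$ still controls $\sigma(X^\circ)$ when the fiber metrics come from the NFRF limit rather than from a holomorphic structure; here one uses that the Smith argument is purely cohomological once one knows $\phi_f$ classifies the surface bundle up to the relevant equivalence, so the analytic origin of the metric is irrelevant to the final pairing.
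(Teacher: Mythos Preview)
Your overall strategy is Smith's original one (Novikov additivity plus a local/global split), and the paper explicitly says that this strategy does adapt to the achiral case. However, the way you split the contributions contains a genuine gap. The assertion $\sigma(\nu_i)=\mp 1$ is \emph{false} for non-separating vanishing cycles: a Lefschetz fibration over $D^2$ with a single non-separating Dehn twist has signature $0$ (the attached $2$-handle kills a class in $H_1(F)$ rather than creating a new $H_2$ class), while the separating case gives $\mp 1$. Correspondingly, the identity $\sigma(X^\circ)=4\langle c_1(\lambda),[\phi_f]\rangle$ is not correct as stated: the Meyer cocycle $\tau_g$ is only \emph{cohomologous} to $4c_1(\lambda)$ on $\m_g$, and since $\phi_f(\sreg)$ is a chain with boundary, the two cocycles differ by a Stokes term coming from the primitive $\beta$ with $\tau_g=\iota^*\alpha+d\beta$. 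These boundary terms are precisely what carry the $-1$ contribution of each non-separating node; your two errors cancel, but neither step is justified as written. In Smith's actual argument this cancellation is mediated by the Atiyah--Patodi--Singer $\eta$-invariants of the boundary mapping tori, not by a bare Novikov split, so you cannot simply say ``Wall non-additivity vanishes'' and move on.

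The paper's own proof takes a different route that sidesteps the index theorem. It writes $\sigma(\xreg)=\langle\phi_f(\sreg),\tau_g\rangle=\langle\phi_f(\sreg),\iota^*\alpha\rangle+\langle\phi_f(\partial\sreg),\beta\rangle$, identifies the first term with $\langle[\phi_f],4c_1(\lambda)\rangle$, observes that the $\beta$-term vanishes on separating monodromies (since $\tau_g$ factors through $Sp_{2g}$) and is a single constant $C'_g$ per signed non-separating monodromy, and then pins down $C'_g=-1$ by invoking Smith's computation for \emph{projective} fibrations (Lemma \ref{SmithsProjFibr}). What this buys is that the only analytic input is the already-known holomorphic case, rather than a direct families-index computation; what your approach would buy, if repaired, is a self-contained local calculation at each node without reference to algebraic geometry. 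Either way, the missing ingredient in your plan is an honest accounting of the boundary correction distinguishing $\tau_g$ from $4c_1(\lambda)$ on chains with boundary.
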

The original proof of Smith was based on the index theorem of Atiyah and Singer applied to the signature operator on fiber bundles with boundary correction term (see, for example, \cite{SignatureofFibreBundles} and \cite{spectralasymm}). We provide a more elementary proof of the signature formula, which uses the index theorem in a more indirect way, through the existence of Meyer signature cocycle $\tau_g$.
\subsection*{Acknowledgments}
I would like to thank my Ph.D. supervisor Louis Funar and Gérard Besson for helpful discussions and encouragements.
I would also like to thank Marco Golla, Patrick Popescu-Pampu, Alberto Verjovsky and Emmanuel Giroux for their feedback on the preliminary versions of this paper.
\section{Preliminaries}
\subsection{Achiral Lefschetz fibrations}
In this subsection $X$ denotes a closed connected oriented 4-manifold, $S$ and $F$ denote closed connected oriented surfaces of genus $h$ and $g$, respectively. We will often restrict ourselves to the case where the base surface $S$ is the 2-sphere $S^2$. \par 
\begin{defin}
    We say that a function $f:X\to S$ admits a \emph{(chiral) Lefschetz type singularity} at $x\in X$ if there exist orientation-preserving local charts centered at $x$ such that $f$ takes the form $\begin{cases}
        \mathbb{C}^2 \to \mathbb{C}\\
        (z,w) \mapsto zw
    \end{cases}$. \par 
    Furthermore, if the local form at $x$ is given by $(z,w)\mapsto \bar{z}w$, we say that $f$ admits an \emph{achiral Lefschetz type singularity}.
\end{defin}
    \begin{rem}
        Note that the charts we are considering are smooth and not necessarily holomorphic. In particular, we do not suppose the existence of a (almost) complex structure on $X$. 
    \end{rem}
\begin{defin}
    We call a smooth map $f:X\to S$ an \emph{achiral Lefschetz fibration}, if each critical point of $f$ is of Lefschetz or achiral Lefschetz type. Moreover, if $f$ only admits chiral Lefschetz singularities, we call it a \emph{Lefschetz fibration}, or a chiral Lefschetz fibration if we want to draw a clear distinction to the achiral case. \par 
    Denote the critical set by $Crit=\{x_1,\dots,x_n\}$ and $p_i=f(x_i)$ the critical values of $f$, respectively.
\end{defin}
\begin{rem}
    The function $f$ can always be perturbed to become injective on the critical set $Crit$, so we will only consider such fibrations in the sequel. \par 
    Note also that if $f$ only admits achiral Lefschetz singularities, we can see it as a chiral Lefschetz fibration by considering the opposite orientation on the target. %check this, where do we need to change the orientation?
\end{rem}
Remark that outside the critical values the restriction $f|_{X\setminus f^{-1}\{p_1,\dots,p_n\}} : X\setminus f^{-1}\{p_1,\dots,p_n\} \to S\setminus \{p_1,\dots,p_n\}$ is a smooth bundle. Denote its fibers by $F$. We will refer to $F$ as the \emph{regular fiber} of $f$, whereas the preimages $f^{-1}(p_i)$ will be called \emph{singular fibers}. We will restrict our attention to the case where the regular fiber $F$ is connected, which is always the case when $S=S^2$. This is due to the well-known long exact sequence in $\pi_1$ and $\pi_0$ of the fibers, the total space and the base for (achiral) Lefschetz fibrations (see \cite[Proposition 8.1.9]{GompfStipsicz}): 
$$
\pi_1(F)\to\pi_1(X)\to\pi_1(S)\to\pi_0(F)\to 1.
$$
\par
The topology of singular fibers can be described using the notion of a \emph{vanishing cycle}. For every critical value $p_i$, there exists a simple closed curve $\alpha_i$ on $F$ such that the singular fiber $f^{-1}(p_i)$ is obtained from $F$ by collapsing $\alpha_i$ to a point. Moreover, the monodromy $\tau_i$ around a Lefschetz-type singular value $p_i$ is given by a right-handed Dehn twist along $\alpha_i$, and by a left-handed twist in the achiral case. Note that the collection of monodromy mapping classes uniquely determines the achiral Lefschetz fibration up to fiber-preserving diffeomorphism. \par 
Lefschetz fibrations of dimension 4 have been extensively studied; see, for example, \cite[Chapter 8]{GompfStipsicz} or \cite[Chapter 7]{Akbulut} for an overview. \par 
We want to mention two major results in the study of chiral Lefschetz fibrations by Gompf \cite{Gompf} and Donaldson \cite{Donaldson}, respectively\thinspace:
\begin{thm}[\cite{Gompf}]
    Let $f:X\to S$ be a Lefschetz fibration such that the homology class of the fiber $[F]$ does not vanish in $H_2(X,\mathbb{R})$. Then $X$ admits a symplectic structure $\omega$ such that the regular fibers $F_x = f^{-1}(x)$ are symplectic.
\end{thm}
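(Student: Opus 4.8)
The plan is to follow the Gompf--Thurston construction: build a closed $2$-form on $X$ that is positive on every fiber and coincides with a standard K\"ahler form near the critical points, and then correct it by a large multiple of a pulled-back area form on $S$ to obtain a symplectic form. First, since $[F]\neq 0$ in $H_2(X;\mathbb R)$ and $H^2(X;\mathbb R)=\operatorname{Hom}(H_2(X;\mathbb R),\mathbb R)$, there is a class $e\in H^2(X;\mathbb R)$ with $\langle e,[F]\rangle>0$. Near each critical point $x_j$ fix an orientation-preserving chart in which $f$ is the holomorphic map $(z,w)\mapsto zw$, and let $\omega_j$ be the restriction of the standard K\"ahler form of $\mathbb C^2$ to a small ball $V_j\ni x_j$; with respect to $\omega_j$ every fiber near $x_j$ --- the regular fibers and the smooth locus of the singular one --- is a complex, hence $\omega_j$-symplectic, curve. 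Over the complement $B\subset S$ of small disks $D_j$ around the critical values, $f$ is an honest surface bundle, so the Thurston construction yields a closed $2$-form $\eta_B$ on $f^{-1}(B)$, in the class $e|_{f^{-1}(B)}$, restricting to a positive area form on each fiber.

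Next I would glue $\eta_B$ to the local forms $\omega_j$. The basic mechanism is that if $\eta_0,\eta_1$ are closed, fiber-positive and cohomologous on a piece of the bundle, say $\eta_0-\eta_1=d\gamma$, then $\eta_1+d(\rho\gamma)=\rho\,\eta_0+(1-\rho)\,\eta_1+d\rho\wedge\gamma$ is again closed and fiber-positive as soon as $\rho$ is pulled back from the base, because then the first two terms form a convex combination of fiber-positive forms and $d\rho\wedge\gamma$ vanishes on fibers. Using this over nested balls and the contractibility of $D_j$, one first extends $\omega_j$ to a fiber-positive closed form $\eta_j$ on all of $f^{-1}(D_j)$ which still equals $\omega_j$ near $x_j$, and then patches $\eta_j$ to $\eta_B$ across an annulus in $S$ over which both are defined and all fibers are regular. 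The outcome is a global closed $2$-form $\eta$ on $X$, positive on every fiber and equal to $\omega_j$ on a neighborhood of each $x_j$. I expect this patching to be the main technical point: one must thread the local holomorphic models through the singular fibers without ever destroying fiber-positivity, and this is where both the normal form near $x_j$ and the contractibility of the disks are genuinely used, together with some care about cohomology classes.

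Finally, fix an area form $\omega_S$ on $S$ and put $\omega_K=\eta+K f^*\omega_S$ for a constant $K>0$. It is closed, and its restriction to any fiber equals $\eta|_F$, a positive area form, so the fibers are symplectic; it remains to make it nondegenerate. Near $x_j$ one has $\omega_K=\omega_j+K f^*\omega_S$, hence $\omega_K^2=\omega_j^2+2K\,\omega_j\wedge f^*\omega_S$ with $\omega_j^2>0$, and a direct check in the local model gives $\omega_j\wedge f^*\omega_S\geq 0$, so $\omega_K^2>0$ there for every $K>0$. On the compact complement of the balls $V_j$, $f^*\omega_S$ is positive on a complement to the (now nondegenerate) fibers, so $\eta\wedge f^*\omega_S\geq c>0$ there while $\eta^2$ is bounded below; hence $\omega_K^2=\eta^2+2K\,\eta\wedge f^*\omega_S>0$ once $K$ is large enough. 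Thus for $K\gg 0$ the form $\omega_K$ is symplectic, induces the given orientation on $X$, and makes every regular fiber a symplectic submanifold, which proves the theorem.
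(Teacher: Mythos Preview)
The paper does not give its own proof of this statement: it is quoted as a background result with a citation to Gompf, so there is nothing to compare your argument against on the paper's side. Your sketch is essentially the standard Gompf--Thurston construction (local K\"ahler models at the critical points, a Thurston-type fiberwise area form over the regular locus, gluing via cutoffs pulled back from the base, and finally inflating by a large multiple of $f^*\omega_S$), and it is correct in outline.

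Two places deserve a bit more care if you want this to stand on its own. First, in the gluing step you need the two closed forms to be cohomologous on the overlap; over an annulus in $S$ this amounts to matching the fiber integrals, so you should normalize $\omega_j$ (or $\eta_B$) so that $\int_F\omega_j=\int_F\eta_B=\langle e,[F]\rangle$ before invoking the convex-combination trick. Second, the extension of $\omega_j$ across the whole of $f^{-1}(D_j)$ passes through the singular fiber, not just regular ones; you are implicitly using that away from $x_j$ the singular fiber is smooth and handled by the bundle argument, while a fixed neighborhood of $x_j$ is covered by the K\"ahler model --- it is worth saying this explicitly. The final nondegeneracy argument is fine: near $x_j$ both $\omega_j^2>0$ and $\omega_j\wedge f^*\omega_S\geq 0$ (the latter because $f$ is holomorphic in the chart), and on the compact complement $\eta\wedge f^*\omega_S$ is bounded below by a positive constant while $\eta^2$ is bounded, so $\omega_K^2>0$ for $K$ large.
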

\begin{thm}[\cite{Donaldson}]
    Let $(X,\omega)$ be a symplectic manifold. Then there exists a number $n\in \mathbb{N}$ such that $X\#^n \overline{\mathbb{C}P^2}$ admits a Lefschetz fibration.
\end{thm}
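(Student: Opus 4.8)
The plan is to reproduce Donaldson's construction of Lefschetz pencils by approximately holomorphic geometry and then resolve the pencil into a fibration by blowing up its base points. After a small perturbation making $[\omega]$ rational and a rescaling (neither of which affects the smooth conclusion of the theorem), we may assume $[\omega/2\pi]\in H^2(X,\mathbb{Z})$, so that there is a Hermitian line bundle $L\to X$ with a unitary connection of curvature $-i\omega$ and $c_1(L)=[\omega/2\pi]$. Fix a compatible almost complex structure $J$ and its associated metric $g$; this endows $L$ with a $\bar\partial$-operator that fails to be integrable, but only by terms controlled by the torsion of $J$. The whole argument takes place in the high tensor powers $L^{\otimes k}$ with the rescaled metric $g_k=kg$: in these metrics balls of $g_k$-radius $O(1)$ are $g$-balls of radius $O(k^{-1/2})$, and on such balls $(X,J,L^{\otimes k})$ agrees, up to errors $O(k^{-1/2})$, with the flat model $(\mathbb{C}^2,i,\mathcal{O})$ carrying its Gaussian-weighted holomorphic sections. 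A sequence $s_k$ of sections of $L^{\otimes k}$ is \emph{asymptotically holomorphic} if $|s_k|,|\nabla s_k|,|\nabla^2 s_k|\leq C$ and $|\bar\partial s_k|,|\nabla\bar\partial s_k|\leq C k^{-1/2}$ uniformly in $g_k$.

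From a pair of such sections one builds a candidate pencil. Given an asymptotically holomorphic pair $(s_0,s_1)$ of sections of $L^{\otimes k}$, set $B_k=\{s_0=s_1=0\}$ and consider the projectivised ratio
\[
\phi_k=[s_0:s_1]\colon X\setminus B_k\to \mathbb{C}P^1.
\]
If $(s_0,s_1)$ is uniformly transverse to $0$ as a section of $L^{\otimes k}\oplus L^{\otimes k}$, then $B_k$ is a finite set of points cut out nondegenerately; if moreover the derivative $\partial\phi_k$ vanishes only at finitely many points with nondegenerate complex Hessian, then the smallness of $\bar\partial$ allows one to straighten $\phi_k$ into the standard Lefschetz model $(z,w)\mapsto zw$ in asymptotically $J$-complex coordinates, which is precisely the local form demanded by the definition of a (chiral) Lefschetz singularity; orientations agree because these coordinates are asymptotically $J$-complex and $J$ is compatible with $\omega$. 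Resolving the pencil by blowing up the $n=\langle c_1(L^{\otimes k})^2,[X]\rangle$ base points then yields a genuine map $\tilde X\to\mathbb{C}P^1$, and since the blow-up of a point in an oriented $4$-manifold is a connected sum with $\overline{\mathbb{C}P^2}$, the resolved total space is $X\#^n\overline{\mathbb{C}P^2}$ and carries a Lefschetz fibration.

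The heart of the matter, and the genuine obstacle, is producing sections that satisfy these transversality conditions while remaining asymptotically holomorphic, since the usual Sard--Thom genericity arguments destroy the $\bar\partial$-bound. This is supplied by Donaldson's \emph{quantitative (effective) transversality} theorem: for any asymptotically holomorphic $s_k$ there are asymptotically holomorphic perturbations $\sigma_k$, with $C^1$-norm as small as desired, such that $s_k+\sigma_k$ is transverse to $0$ with a threshold $\eta>0$ independent of $k$. The proof is local-to-global: one covers $X$ by $O(k^2)$ Darboux balls of $g_k$-size $O(1)$, and on each ball uses the concentrated asymptotically holomorphic sections (localized Gaussians of the form $e^{-k|z|^2/4}$) to reduce matters to perturbing a single almost-holomorphic $\mathbb{C}$-valued function away from a neighbourhood of $0$, which follows from a quantitative Sard-type estimate on the near-critical sets of almost-holomorphic functions. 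The delicate point is the \emph{globalisation}: the local corrections must be patched so that a later perturbation does not spoil the transversality already achieved on earlier balls. This is handled by Donaldson's controlled induction over the balls, in which the admissible perturbation sizes shrink only polylogarithmically in the number of steps, so that summing all local corrections still produces a globally small perturbation and a uniform final threshold.

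Finally, the transversality theorem is applied in three successive rounds to progressively higher jet data, as in Donaldson's and Auroux's treatments: first to $(s_0,s_1)$ jointly, making $B_k$ a transverse finite set; then to the derivative $\partial\phi_k$, making the critical locus finite; and lastly to the $2$-jet, forcing the complex Hessian to be nondegenerate at each critical point and the critical values to be distinct. Since $\bar\partial\phi_k=O(k^{-1/2})$, at each critical point the map is a nondegenerate holomorphic quadratic up to a controllably small error, and the complex Morse lemma in its parametric asymptotically holomorphic form produces coordinates in which $\phi_k$ is exactly $(z,w)\mapsto zw$. This shows that for all sufficiently large $k$ the manifold $X\#^n\overline{\mathbb{C}P^2}$ admits a Lefschetz fibration, with $n$ the fiber self-intersection number above. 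I expect the quantitative transversality and its globalisation to be the only real difficulty; the passage from pencil to fibration and the verification of the local Lefschetz normal form are comparatively formal once transversality is secured.
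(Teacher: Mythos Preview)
The paper does not prove this statement at all: it is quoted in the preliminaries as a known theorem of Donaldson, with a bare citation and no argument. So there is no ``paper's own proof'' to compare your proposal against.

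That said, what you have written is a faithful and accurate sketch of Donaldson's original proof via approximately holomorphic geometry (with the Auroux refinements for the successive jet transversalities). The reduction to rational $[\omega]$, the asymptotically holomorphic sections of high powers $L^{\otimes k}$, the quantitative Sard/transversality theorem with its local-to-global induction, the three rounds of transversality on increasing jet order, the asymptotically holomorphic Morse lemma giving the chiral local model, and the resolution of the pencil by blowing up the base locus to obtain $X\#^n\overline{\mathbb{C}P^2}$ are all correct and in the right logical order. Your identification of the globalisation step in the effective transversality as the crux is also right. One small cosmetic point: the theorem as stated in the paper is implicitly for closed $4$-manifolds (that is the ambient setting of the paper), which you tacitly use when invoking the $\mathbb{C}^2$ local model and when counting base points as $\langle c_1(L^{\otimes k})^2,[X]\rangle$; you might say so explicitly.
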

These two results establish a deep connection between symplectic manifolds and Lefschetz fibrations of dimension $4$. \par 
For achiral Lefschetz fibrations, there are known inequalities relating Betty numbers $b_1$ and $b_2$. These provide obstruction to existence of achiral Lefschetz fibrations on manifolds with fundamental group big enough. By contrast, much less is known about the simply connected manifolds that support achiral Lefschetz fibrations.
\subsection{Moduli space of complex curves}
In this subsection $F$ still denotes a closed connected oriented surface of genus $g$. Recall that $\m_g$ is the space of complex structures on $F$ up to biholomorphism. It is well-known that $\m_g$ is a complex algebraic variety and actually a complex orbifold of dimension $3g-3$ for $g\geq 2$.\par 
The universal curve $\uc_g$ is a natural bundle over $\m_g$. For a point $x\in\m_g$ represented by the class of a Riemann surface $C_x$, the fiber above $x$ is precisely the complex curve $C_x$. Similarly to the moduli space, the universal curve is a complex algebraic variety, and a complex orbifold of dimension $3g-2$. This bundle satisfies the following important universal property in the analytic category\thinspace: every family $C\to B$ of smooth complex curves of genus $g$ without global automorphisms is locally isomorphic to the pullback of $\uc_g$ via the naturally induced morphism $\phi :B \to \m_g$. The morphism $\phi: B\to \m_g$ is called the \emph{classifying map} for the family $C\to B$.\par 
The Hodge bundle $\lambda_g$ is a vector bundle over $\m_g$ whose fiber over a point $x\in \m_g$ is given by the space of holomorphic differentials on $C_x$. \par 
We should also mention that the cotangent bundles $T^*\m_g$ and $T^*\uc_g$ admit nice descriptions, despite $\m_g$ being an orbifold. The cotangent space at $x=[C_x]\in\m_g$ is given by the space of holomorphic quadratic differentials on $C_x$, and the cotangent space at $(x,p) = ([C_x], p\in C_x) \in \uc_g$ is given by the space of meromorphic quadratic differentials on $C_x$ with at most a simple pole at $p$ and holomorphic elsewhere; the interested reader can consult \cite{ArbarelloCornalbaGriffiths} for a more detailed description. \par 
Finally, $\m_g$ is endowed with a Hermitian metric, called the \emph{Weil--Petersson} metric. With the above description of cotangent space, it is given by 
    \begin{align*}
        &WP: T^*_x \m_g \times T^*_x \m_g \to \mathbb{C} \\
        &\alpha, \beta \mapsto \int_{S_x} \alpha \bar{\beta} (g_x)^{-1}
    \end{align*}
    where $g_x = \lambda dz d\bar{z}$ is the volume form of the unique hyperbolic metric compatible with the Riemann surface structure $S_x$.
Moreover, the Weil--Petersson metric is well defined on the cotangent space of the universal bundle, taking $g_{x,p}$ to be the uniformized metric associated to the Riemann surface $S_x$ with a cusp at $p$. More details on the Weil--Petersson metric can be found in the survey paper \cite{Wolpert} by Wolpert. \par 
Although the space $\m_g$ is not compact, it has a natural compactification. A point on an algebraic curve $C$ over $\mathbb{C}$ is called a \emph{node} if it has a neighbourhood biholomorphic to the subset $\{zw=0\}$ of $\mathbb{C}^2$. We say that a curve $C$ is \emph{nodal}, if every point of $C$ is either smooth or nodal. If, moreover, the automorphism group of $C$ is finite, we say that it is \emph{stable}. \par
We denote by $\dm_g$ the space of isomorphism classes of genus $g$ stable curves. This space is a \emph{Deligne-Mumford stack}, and thus a complex orbifold. It naturally contains $\m_g$, and it was proven by Deligne and Mumford \cite{DM} that $\m_g$ is open and dense in $\dm_g$. Moreover, $\dm_g$ is a compact projective algebraic variety called the \emph{Deligne-Mumford compactification} of $\m_g$. \par 
The universal curve extends to the compactification, although not as a bundle anymore. We will denote the extension $\scr_g$. It admits a similar universal property in the analytic category, but for families of stable curves this time (see Theorem 12.1 in \cite{HUbbardKoch}). \par 
The Hodge bundle does not extend as a vector bundle over $\dm_g$, but its first Chern class $c_1(\lambda_g) \in H^2(\m_g, \mathbb{Z})$ does extend to $H^2(\dm_g,\mathbb{Z})$. Using a slight abuse of notation, we still denote this extension by $c_1(\lambda_g)$. \par 
Note that the Weil--Petersson metric does extend to $\dm_g$ and $\scr_g$, but we will not delve into details, as it would require us to describe the cotangent space to $\dm_g$. The fundamental paper \cite{HUbbardKoch} by Koch and Hubbard provides fine details of the geometry of the Deligne-Mumford compactification and in particular its cotangent spaces. \par 
We are mostly interested in the existence of the Weil--Petersson metric because of the following\thinspace:
\begin{defin}
    Let $f: X\to D^2$ be a genus $g$ achiral Lefschetz fibration over a disk with a single singularity $x_s$ with monodromy given by $\tau\in \mo_g$. We say that a Riemannian metric $h$ on $X\setminus {x_s}$ is \emph{slicing} for $\tau$ if the fibers $(F_y, h|_{F_y})$ are hyperbolic surfaces of constant curvature $-1$ for every $y\in D\setminus f(x_s)$.
\end{defin}
\begin{rem}
    The intuition for the adjective "slicing" can be seen from the following picture. A choice of a slicing metric corresponds exactly to a choice of an embedded disk $D \subseteq \dm_g$ which intersects the singular divisor corresponding to the monodromy $\tau$ transversely at a single point (i.e. "slices" it).
\end{rem}
It turns out that such metrics always exist\thinspace:
\begin{prop}
    Let $\gamma$ be an essential simple closed curve on $\Sigma_g$. Then the Dehn twist $\tau_\gamma$ admits a slicing metric.
\end{prop}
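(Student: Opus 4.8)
The plan is to realize a fibration with monodromy $\tau_\gamma$ as an explicit one-parameter degeneration of smooth genus $g$ curves to a stable nodal curve — a ``transverse slice'' of $\sdiv_g$ in the sense of the remark above — and to take as slicing metric the fiberwise uniformizing hyperbolic metric, which $\scr_g$ carries over the interior $\m_g$. The one substantial point will be that this family of hyperbolic metrics is smooth up to the nodal limit, away from the node. Since an achiral Lefschetz fibration over a disk with a single singularity is determined up to fiber-preserving diffeomorphism by its monodromy, and since reversing the orientation of the base disk interchanges the chiral and achiral signs of the Dehn twist without altering the fibers, it suffices to exhibit one convenient model.

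First I would pinch $\gamma$. Because $\gamma$ is essential, the resulting nodal curve $C_0$ of genus $g$ with a single node is \emph{stable}: its normalization $\widehat C_0$ is a smooth curve — connected with two punctures if $\gamma$ is nonseparating, a two-component curve with one puncture on each component otherwise — all of whose components have negative Euler characteristic, so it carries a complete finite-area hyperbolic metric $\sigma_0$, with standard holomorphic cusp coordinates $q_1,q_2$ near the two points lying over the node. For small $s\in\mathbb C$, gluing the two cusp neighbourhoods by the relation $q_1q_2=s$ produces a smooth genus $g$ curve $C_s$ degenerating to $C_0$ as $s\to 0$, and $\mathcal X:=\bigcup_s C_s$ is a complex surface: it is the hypersurface $\{q_1q_2=s\}$ near the neck and a product $\widehat C_0^{\mathrm{thick}}\times D^2$ away from it. The projection $f:\mathcal X\to D^2$ is thus a genus $g$ Lefschetz fibration with a single critical point $x_s$, the node of $C_0$; near $x_s$ it is literally the versal deformation of a node meeting the discriminant with multiplicity one, so by Picard--Lefschetz its monodromy about $s=0$ is a single Dehn twist, about the plumbing circle $\{|q_1|=|q_2|\}$, which is isotopic in $C_s$ to $\gamma$. (Equivalently, $s\mapsto[C_s]$ is a disk in $\dm_g$ transverse to $\sdiv_g$ meeting it once, and $\mathcal X$ is the pullback of $\scr_g$ along it.) Hence $(\mathcal X,f)$ is a model for the fibration with monodromy $\tau_\gamma$.

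It remains to put a metric on $\mathcal X\setminus\{x_s\}$. For $s\ne 0$ let $\sigma_s$ be the unique metric of constant curvature $-1$ on the smooth genus $g$ curve $C_s$, given by uniformization; I claim the $\sigma_s$ assemble into a smooth section $\sigma$ of $\mathrm{Sym}^2\mathcal V^*$ over all of $\mathcal X\setminus\{x_s\}$, where $\mathcal V=\ker df$ is the vertical tangent bundle. Granting this, choose any horizontal distribution $\mathcal H$ complementary to $\mathcal V$ on $\mathcal X\setminus\{x_s\}$ (possible since $f$ is a submersion there) and any Riemannian metric $g_B$ on $D^2$, and set $h:=\sigma\oplus f^*g_B$ with respect to the splitting $T(\mathcal X\setminus\{x_s\})=\mathcal V\oplus\mathcal H$. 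Then $h$ is a smooth Riemannian metric, and for every $s\ne 0$ its restriction to the fiber $C_s$ is exactly $\sigma_s$, which has constant curvature $-1$; therefore $h$ is slicing for $\tau_\gamma$. Transporting $h$ through the fiber-preserving diffeomorphism between $(\mathcal X,f)$ and the fibration of the statement then proves the proposition.

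The hard part will be the claimed smoothness of $\sigma$ up to the smooth locus of the singular fiber (the node $x_s$ itself is deleted). Over the interior $\m_g$ this is the classical fact that the uniformizing metric depends real-analytically on the complex structure — via the implicit function theorem for the Gauss-curvature equation on a fixed surface — so the real issue is the degeneration $s\to 0$: the hyperbolic metrics on the thick part must converge in $C^\infty$, jointly in the fiber variable and in $s$, to the complete hyperbolic metric of $\widehat C_0$, and on the thin neck the collapsing collar metrics must vary smoothly with $s$. This is exactly the asymptotic analysis of the hyperbolic metric near the Deligne--Mumford boundary due to Masur and Wolpert (see \cite{Wolpert}), in the plumbing-coordinate description of a neighbourhood of $\sdiv_g$ of Hubbard and Koch \cite{HUbbardKoch}. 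The remaining ingredients — the plumbing construction, the Picard--Lefschetz identification of the monodromy, and the fiberwise assembly $h=\sigma\oplus f^*g_B$ — are routine.
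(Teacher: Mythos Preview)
Your argument is correct and close in spirit to the paper's, but the two differ in how the metric is produced. The paper also takes a disk $\iota:D^2\hookrightarrow\dm_g$ transverse to $\sdiv_g$ with boundary monodromy $\tau_\gamma$ and pulls back $\scr_g$, exactly as in your plumbing construction; but instead of assembling the fiberwise uniformizers directly, it restricts the ambient Weil--Petersson metric $g_{WP}$ on $\scr_g$ to the slice, observes that on each fiber this is $e^u$ times the hyperbolic metric with $u$ smooth on $X$, and takes $e^{-u}g_{WP}|_X$. The virtue of the paper's route is economy: the smoothness and the extension across the nodal fiber are inherited from the already-established smoothness of $g_{WP}$ on $\scr_g$, so the Wolpert asymptotics you invoke are absorbed into the preliminary section rather than appearing in the proof. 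The virtue of your route is transparency: you make explicit exactly which analytic fact is doing the work (smooth dependence of the uniformizing metric on the plumbing parameter, away from the node), and you avoid the intermediate conformal rescaling step. Note, incidentally, that the smoothness of your $\sigma$ on $\mathcal X\setminus\{x_s\}$ and the smoothness of the paper's conformal factor $u$ are essentially equivalent statements, so neither argument is genuinely cheaper at the analytic level.
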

\begin{proof}
    Let $\iota : D^2 \to \dm_g$ be an embedding such that $D^2 \cap \sdiv_g = \{0\}$ and the boundary monodromy is given by $\tau_\gamma$. Denote $X = \pi^{-1}(\iota(D^2))$, where $\pi : \scr_g \to \dm_g$, and consider the restriction $h: = g_{WP}|_X$. By direct computation, the restriction of this metric to the fibers is a rescaling of the constant curvature metric by $e^u$ with $u: X\to \mathbb{R}$ smooth. Therefore, the metric $e^{-u}h$ is a slicing metric for $\tau_{\gamma}$.
\end{proof}
\subsection{Ricci flow for surfaces}
In this subsection we quickly remind the reader about the Ricci flow on surfaces and its use in a proof of the uniformization theorem. Most of results were obtained by Hamilton (\cite{Ham2} and \cite{Ham1}) with a substantial improvement of short-time existence result by DeTurck \cite{DeTurck}.
These results were rewritten in a more accessible way in multiple textbooks. Here we will mostly refer to the textbook \cite{ChowKnopf} by Chow and Knopf.
\begin{defin}
    Let $(F,g_t)_{t\in [0; T)}$ be a closed connected genus $g$ oriented surface $F$ equipped with a family of smooth Riemannian metrics $g_t$ with a $C^1$-dependence on the parameter $t$. We say that this family is a solution to the \emph{normalized Ricci flow} with the initial condition $g_0$ if 
    $$\partial_t g_t = (k-K_{g_t}) g_t$$
    where $k$ is a constant and $K_{g_t}$ is the scalar curvature of $g_t$.
\end{defin}
The constant $k$ plays the role of the desired uniform curvature, so it should respect the Gauss-Bonnet formula (i.e. $k\cdot vol(F) = 2\pi\chi(F)$). As we consider the high genus ($g\geq 2$) case, often $k$ will be $-1$. \par 
In general, the Ricci flow has good short-time behaviour in any dimension\thinspace:
\begin{thm}[\cite{Ham3}, \cite{DeTurck}]
    Let $(M, g_0)$ be an $n$-dimensional manifold equipped with a complete Riemannian metric. Then there exists $T>0$ and a unique solution $(g_t)$ of the normalized Ricci flow on $0\leq t < T$ with the initial condition $g_0$.
\end{thm}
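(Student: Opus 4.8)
The plan is to follow the DeTurck modification of the Ricci flow, which converts the weakly parabolic flow equation into a strictly parabolic one amenable to standard PDE theory. The central difficulty is that the (normalized) Ricci flow is invariant under the full diffeomorphism group, so its linearization is not elliptic: the principal symbol of the operator $g\mapsto \mathrm{Ric}(g)$ annihilates exactly the directions coming from infinitesimal reparametrizations (the image of the Lie derivative), a manifestation of the contracted Bianchi identity. First I would compute the linearization of the Ricci tensor and exhibit this symbol degeneracy explicitly, confirming that short-time existence cannot be obtained by a naive appeal to parabolic theory.

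To break the gauge symmetry, I would fix the background metric $\tilde g := g_0$ and introduce the DeTurck vector field $W = W(g)$ defined in coordinates by $W^k = g^{ij}\bigl(\Gamma^k_{ij}(g) - \Gamma^k_{ij}(\tilde g)\bigr)$, and consider the Ricci--DeTurck flow
$$\partial_t g = -2\,\mathrm{Ric}(g) + \mathcal{L}_{W(g)}\,g,$$
together with the additional zeroth-order reaction term coming from the normalization, which I treat below. The key computation is that the added Lie-derivative term exactly cancels the non-elliptic part of the symbol, so that the right-hand side becomes a quasilinear strictly parabolic operator whose principal part is $g^{ij}\partial_i\partial_j$ acting componentwise on $g$. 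Strict parabolicity then places the modified system within the scope of the standard short-time existence and uniqueness theory for quasilinear parabolic systems (via linearization and a contraction or inverse-function-theorem argument in parabolic Hölder or Sobolev spaces), yielding a unique solution $g(t)$ on a short interval $[0,T)$ with $g(0)=g_0$.

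Next I would recover a genuine solution of the flow from the DeTurck solution. Given $g(t)$, I solve the ordinary differential equation $\partial_t \varphi_t = -W(g(t))\circ \varphi_t$ with $\varphi_0 = \mathrm{id}$ for a smooth family of diffeomorphisms $\varphi_t$, defined on the same short interval by completeness and standard ODE flow estimates, and verify by a direct computation that $\bar g(t) := \varphi_t^* g(t)$ satisfies $\partial_t \bar g = -2\,\mathrm{Ric}(\bar g)$ with $\bar g(0)=g_0$. This gives existence. For uniqueness I would run the argument in reverse: any solution of the Ricci flow can be put into DeTurck gauge by composing with the solution of the harmonic map heat flow from $(M,g(t))$ to $(M,\tilde g)$, reducing uniqueness for the weakly parabolic flow to uniqueness for the strictly parabolic DeTurck flow together with uniqueness for the (strictly parabolic) harmonic map heat flow and for the generating ODE.

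Finally, I would pass from the unnormalized to the normalized flow. The normalization only inserts a zeroth-order term — $+\tfrac{2}{n}\,r\,g$ in general, or $(k-K_g)g$ in the surface case — which does not affect the principal symbol and hence preserves strict parabolicity of the DeTurck modification; equivalently, the normalized and unnormalized flows are related by a time-dependent rescaling of the metric and a reparametrization of time, so short-time existence and uniqueness transfer directly. The main obstacle throughout is the symbol degeneracy of the first step: once DeTurck's trick restores strict parabolicity, the remaining steps are applications of standard parabolic and ODE theory.
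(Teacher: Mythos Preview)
The paper does not supply its own proof of this statement: it is quoted as a background result with attribution to Hamilton and DeTurck, and the exposition moves on immediately to the long-time behaviour. Your outline is the standard DeTurck argument and is precisely what those references provide, so in that sense your proposal is correct and aligned with the cited approach.

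One caveat worth flagging: the statement as written allows $(M,g_0)$ to be merely complete rather than closed. Your sketch is clean on closed manifolds, but on non-compact complete manifolds the step where you integrate the DeTurck vector field $W$ to a family of diffeomorphisms $\varphi_t$ is not automatic, since $W$ need not be a complete vector field, and the parabolic existence theory itself requires additional bounded-geometry hypotheses (this is Shi's theorem rather than DeTurck's). For the purposes of the paper this is harmless, since only the closed surface case is ever used, but if you intend your write-up to match the generality of the stated theorem you should either restrict to closed $M$ or indicate the extra ingredients needed in the complete non-compact setting.
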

The long-time behaviour is much more complicated in general, as the flow can form singularities. However, in the surface case the flow is singularity-free\thinspace:
\begin{thm}[\cite{Ham2}]
    Let $(F, g_0)$ be a closed genus $g$ surface. Then there exists a solution $(g_t)$ of the normalized Ricci flow on $[0;\infty)$. Moreover, the family $g_t$ converges in $\mathcal{C}^\infty$ topology to the constant curvature $k$ metric $g_\infty$ conformally equivalent to $g_0$.
\end{thm}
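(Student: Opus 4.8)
The plan is to exploit the special feature of dimension two: the normalized Ricci flow preserves the conformal class, so the whole system collapses to a single scalar reaction--diffusion equation. First I would write $g_t = e^{2u_t} g_0$; the conformal transformation law for the curvature turns the defining equation $\partial_t g_t = (k - K_{g_t})g_t$ into a single quasilinear, strictly parabolic equation for the scalar $u_t$, so the short-time existence and uniqueness stated above (obtained via DeTurck's trick) produces a solution on a maximal interval $[0,T)$ with every $g_t$ conformal to $g_0$. A direct computation then gives the evolution of the curvature itself,
$$\partial_t K_{g_t} = \Delta_{g_t} K_{g_t} + K_{g_t}(K_{g_t} - k),$$
while Gauss--Bonnet together with the area-preserving normalization shows that the spatial average of $K_{g_t}$ stays equal to $k$ for all $t$.

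The second step is to extract a priori curvature bounds from the scalar maximum principle applied to this reaction--diffusion equation. Comparing $K_{\min}(t)=\min_x K_{g_t}$ and $K_{\max}(t)=\max_x K_{g_t}$ with solutions of the ODE $\dot\phi = \phi(\phi - k)$, I expect to trap $K_{g_t}$ in a fixed interval and, in the genus $g\geq 2$ case relevant to the rest of the paper (where $k<0$), to conclude that $|K_{g_t} - k|$ decays exponentially: the reaction term $K_{g_t}(K_{g_t}-k)$ is dissipative near the stable equilibrium $K=k<0$, so $K_{\min}$ increases to $k$ and, once the curvature is everywhere negative, $K_{\max}$ decreases to $k$ as well. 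These uniform curvature bounds, reinterpreted through the conformal reduction, keep $u_t$ in a compact $C^0$ range and prevent the metric from degenerating; a standard continuation argument then upgrades the maximal interval to $T=+\infty$.

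Third, I would promote this $C^0$ control of curvature to $C^\infty$ convergence of the metric. Following Hamilton, I would introduce the potential $f_t$ defined by $\Delta_{g_t} f_t = K_{g_t} - k$ (solvable since $K_{g_t}-k$ has zero average) and study the trace-free Hessian $M_{ij} = \nabla_i\nabla_j f_t - \tfrac12(\Delta_{g_t} f_t)\,g_{ij}$, whose vanishing characterizes a constant-curvature metric. Deriving evolution inequalities for $|\nabla K_{g_t}|^2$ and $|M|^2$ and applying the maximum principle once more should yield their exponential decay, and interpolation with the uniform curvature bounds (Shi-type estimates) gives uniform bounds on all covariant derivatives of the curvature, hence uniform $C^m$ bounds on $g_t$. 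Since $\partial_t g_t = (k - K_{g_t})g_t$, the time derivative of the metric then decays exponentially in every $C^m$ norm, so $g_t$ is Cauchy and converges in $C^\infty$ to a limit $g_\infty$ with $K_{g_\infty}\equiv k$ lying in the conformal class of $g_0$, as required.

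The main obstacle is the genuinely borderline case hidden by the generality of the statement, namely the sphere ($g=0$, $k>0$). There the reaction term is no longer dissipative: the maximum-principle comparison for $K_{\max}$ can blow up when the curvature is positive, and there is no a priori sign trapping it. Closing the argument in that case requires Hamilton's entropy functional $\int K_{g_t}\log K_{g_t}\,dA$ and the Li--Yau--Hamilton differential Harnack inequality, combined with a blow-up and injectivity-radius analysis to rule out curvature concentration, before the convergence step can be carried out. For the application in the present paper only $g\geq 2$ is needed, and there the negative sign of $k$ makes the estimates above comparatively direct.
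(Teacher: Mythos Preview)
The paper does not actually prove this theorem; it is quoted as a known result of Hamilton, with a pointer to \cite{Ham1} and \cite[Chapter~5]{ChowKnopf} for the proof in the case $g\geq 2$. Your outline is correct and follows precisely that standard argument, and in fact it parallels almost verbatim the techniques the paper deploys later for the fibered version (Theorem~\ref{NFRF} and the lemmas following it): the curvature evolution equation and the minimum principle for the lower bound, the potential $\Phi$ solving $\Delta\Phi = k-K$ (your $f_t$ up to sign), and maximum-principle bounds on $|\nabla^i K|^2$ for higher derivatives. The only cosmetic difference is that the paper, following Chow--Knopf, packages the upper-bound step via the scalar quantity $H=K-k+2|\nabla\Phi|^2$ rather than the trace-free Hessian $M$ you mention; the two are equivalent, since the inequality $(\Delta\Phi)^2 \leq 2|\nabla^2\Phi|^2$ used in the paper is exactly $|M|^2\geq 0$. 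Your remark that the $g=0$ case requires the entropy and Harnack machinery is also accurate and matches the literature, though the paper only needs $g\geq 2$.
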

In the high genus ($g\geq 2$) case, the proof is relatively simple and can be consulted in \cite[Chapter 5]{ChowKnopf} or in the original article \cite{Ham1}. We underline the importance of understanding this proof, as very similar techniques will be used in the main section. We will mention here the most important technical tool in the proof, the \emph{maximum principle}\thinspace:
\begin{prop}[Maximum principle]\label{maxprinc}
    Let $(M, g_t)$ be a closed $n$-manifold equipped with a smooth family of Riemannian metrics. Consider $u:M \to \mathbb{R}$ a smooth function satisfying 
    $$\partial_tu - \Delta_{g_t}u \leq g_t(V(t), \nabla^{g_t}u)+F(u)$$
    for some vector field $V(t)$ and Lipschitz function $F$. Then for any constant $c$ such that $u_{t=0} \leq c$, we have $u(x,t)\leq U(t)$, where $U$ is the solution to the ODE $\begin{cases}
        \frac{d}{dt}U=F(U)\\ U(0)=c
    \end{cases}$.
\end{prop}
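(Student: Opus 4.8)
The plan is to run the standard parabolic comparison argument built around a first-touching-time analysis, exploiting the fact that $M$ is closed: the spatial maximum of $u(\cdot,t)$ is then always attained, and at an attaining point the gradient vanishes while the Laplacian is nonpositive. Throughout I regard $u$ as a smooth function on $M\times[0,T)$ (despite the notation $u:M\to\mathbb{R}$), since both $\partial_t u$ and $u(x,t)$ appear, and I work on a time interval on which both $u$ and $U$ exist.

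First I would reduce to a \emph{strict} inequality by perturbing the comparison ODE. For $\epsilon>0$ let $U_\epsilon$ solve $U_\epsilon'=F(U_\epsilon)+\epsilon$ with $U_\epsilon(0)=c+\epsilon$; since $F$ is Lipschitz, this solution exists and is unique, and by continuous dependence of ODE solutions on the initial datum and on the forcing term (Grönwall), $U_\epsilon\to U$ uniformly on compact time intervals as $\epsilon\to0$. It therefore suffices to prove $u(x,t)<U_\epsilon(t)$ for every $\epsilon>0$ and then let $\epsilon\to0$.

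Next I would set $\phi(t)=\max_{x\in M}u(x,t)-U_\epsilon(t)$, which is continuous because $M$ is compact and $u$ is smooth, and note $\phi(0)\leq c-(c+\epsilon)=-\epsilon<0$. Suppose $\phi$ reaches $0$, and let $t_0$ be the first such time, with the maximum attained at $x_0\in M$. Since $x_0$ is a spatial maximum of the smooth function $u(\cdot,t_0)$ on the closed manifold $M$, we get $\nabla^{g_{t_0}}u(x_0)=0$ and $\Delta_{g_{t_0}}u(x_0)\leq0$. Comparing the one-sided difference quotients of $\phi$ and of $t\mapsto u(x_0,t)-U_\epsilon(t)$ as $t\to t_0^-$ — using $\phi(t)<0=\phi(t_0)$ for $t<t_0$ together with $\phi(t)\geq u(x_0,t)-U_\epsilon(t)$ — yields $\partial_t u(x_0,t_0)\geq U_\epsilon'(t_0)$. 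Inserting these three facts into the hypothesized differential inequality at $(x_0,t_0)$, where the gradient term vanishes and $u(x_0,t_0)=U_\epsilon(t_0)$, gives
$$U_\epsilon'(t_0)\leq\partial_t u(x_0,t_0)\leq\Delta_{g_{t_0}}u(x_0)+F(U_\epsilon(t_0))\leq F(U_\epsilon(t_0))=U_\epsilon'(t_0)-\epsilon,$$
forcing $\epsilon\leq0$, a contradiction. Hence $\phi<0$ throughout, i.e. $u<U_\epsilon$, and letting $\epsilon\to0$ gives $u\leq U$.

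The main obstacle — and the only place the hypotheses are used delicately — is the time-derivative inequality $\partial_t u(x_0,t_0)\geq U_\epsilon'(t_0)$ at the first touching point, which amounts to differentiating through the spatial maximum. I would handle this not by quoting Hamilton's trick as a black box but by the elementary one-sided difference-quotient comparison above, which is clean precisely because $\phi(t_0)=u(x_0,t_0)-U_\epsilon(t_0)$ while $\phi(t)\geq u(x_0,t)-U_\epsilon(t)$ for all $t$. The Lipschitz hypothesis on $F$ enters only at the two ends of the argument: to make the comparison ODEs well-posed, and to guarantee $U_\epsilon\to U$ in the final limit.
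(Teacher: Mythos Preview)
Your argument is correct: this is the standard proof of the scalar parabolic maximum principle via an $\epsilon$-perturbation of the comparison ODE and a first-touching-time contradiction, and all the steps (vanishing gradient, nonpositive Laplacian, one-sided difference quotient at the first contact time, Lipschitz well-posedness and continuous dependence for $U_\epsilon$) are handled cleanly.

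There is, however, nothing to compare against: the paper does not supply its own proof of this proposition. It is stated as a known technical tool, with the reader referred to standard references (Hamilton's original articles and the textbook of Chow--Knopf). Your write-up is precisely the argument one finds in those sources, so in that sense you have reproduced the ``paper's proof'' by proxy.
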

\begin{rem}
    \begin{enumerate}
        \item In all of our applications, we will drop the $g_t(V(t), \nabla^{g_t}u)$ term by setting $V=0$, while $F(u)$ will represent nonlinear terms in evolution equations for curvature and similar quantities.
        \item If $u$ satisfies the opposite inequality, we can apply the maximum principle to $\Tilde{u}:=-u, \Tilde{F}(u):=-F(-u)$ and $\Tilde{V}:=V$ to obtain the bound $u(x,t) \geq -\Tilde{U}$. When using the result in this way, we will call it \emph{minimum principle}.
        \item The name and the intuition behind the maximum principle come from physics, and more particularly from thermodynamics. The intuition to have in mind is that $\partial_t - \Delta_{g_t}$ is a heat propagation operator, $u$ and $c$ play the role of temperature distribution and maximal temperature respectively, while $F(u)$ can be considered as a nonlinear part of the heat flow. The principle then asserts that an evolution of a temperature distribution is always bounded by an evolution of a uniformly maximal temperature.
    \end{enumerate}
\end{rem}
\section{Construction of the classifying map}
In this section we will present a construction of a smooth immersive classifying map for an arbitrary achiral Lefschetz fibration of fiber genus $g\geq 4$. Moreover, the construction is done in such a way that the resulting map will be well defined up to isotopy of the regular part. \par  %continue
To ensure that the constructed map is smooth near $\sdiv_g$, we shall start by paying attention to the singular points. Choose disjoint open disks $D_i$ around each singular value $p_i\in S$ and denote the union of these disks $\ssing := \bigsqcup D_i$. Let $D'_i$ be an open disk around $p_i$ that satisfies $\overline{D'_i}\subseteq D_i$ and denote $\sreg = S\setminus \bigsqcup D'_i$. Remark that we have left a small overlap between $\sreg$ and $\ssing$, it is done purely for technical reasons to ensure smoothness while gluing. We will denote the corresponding fibered neighbourhoods $\xsing := f^{-1}(\ssing)$ and $\xreg:=f^{-1}(\sreg)$. \par 
We would like to start by putting the slicing metric on the singular part $\xsing$, and then extend it to a Riemannian metric defined on the whole $X$. In order to do so, we will use the following lemma\thinspace:
\begin{lem}\label{slicing}
    Let $f:X\to S$ be an achiral Lefschetz fibration and $p_1,\dots,p_n$ be the set of its singular values with vanishing cycles $\gamma_1,\dots,\gamma_n$. Let $g_{\gamma_1},\dots,g_{\gamma_n}$ be a choice of slicing metrics for the corresponding vanishing cycles. Then the space of smooth Riemannian metrics $g$ on $X$ such that each singular value $p_i$ admits a disk neighbourhood $V_i$ with the restriction of $g$ to $f^{-1}(V_i)$ isometric to the chosen slicing metric $g_{\gamma_i}$, is contractible.
\end{lem}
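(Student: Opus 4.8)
The plan is to exhibit the relevant space of metrics as the space of sections of a fiber bundle whose fibers are themselves contractible, and then invoke the standard fact that such a section space is contractible (indeed, nonempty, since the fibers are contractible and the base is paracompact). The key observation is that the constraint ``agrees with a fixed slicing metric $g_{\gamma_i}$ near each $p_i$'' is a \emph{closed} condition imposed only on the subset $\xsing$, and it is realizable there (this is exactly the content of the preceding Proposition, which produces slicing metrics for Dehn twists). So the space in question is the space of smooth Riemannian metrics on $X$ whose restriction to $f^{-1}(\overline{D'_i})$ (say) equals the prescribed $g_{\gamma_i}$.

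First I would fix, for each $i$, a slightly smaller disk $D''_i$ with $p_i \in D''_i$ and $\overline{D''_i} \subseteq D'_i$, and fix once and for all the slicing metric $g_{\gamma_i}$ on $f^{-1}(D'_i)$; the target space $\mathcal{G}$ is then the set of smooth metrics $g$ on $X$ with $g|_{f^{-1}(\overline{D''_i})} = g_{\gamma_i}|_{f^{-1}(\overline{D''_i})}$ (the extra room between $D''_i$ and $D'_i$ only serves to keep the condition describable by a closed submanifold-with-corners and to ease gluing; one checks this reformulation is equivalent to the one in the statement up to further contraction, or simply takes it as the working definition). Second, I would recall that the space of \emph{all} smooth Riemannian metrics on a manifold is convex: given $g_0, g_1$ metrics, $t g_1 + (1-t) g_0$ is again a metric for all $t \in [0,1]$, since positive-definiteness is preserved under convex combinations. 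Third — and this is the crucial point — the affine-linear homotopy $g_t = t g_1 + (1-t) g_0$ \emph{preserves} the boundary constraint: if $g_0|_{f^{-1}(\overline{D''_i})} = g_1|_{f^{-1}(\overline{D''_i})} = g_{\gamma_i}|_{f^{-1}(\overline{D''_i})}$, then $g_t$ restricts to the same thing on that set for every $t$. Hence $\mathcal{G}$ is itself convex, and in particular contractible — provided it is nonempty.

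So the only real content is \textbf{nonemptiness}: one must produce at least one global metric on $X$ extending the prescribed $g_{\gamma_i}$ on the pieces $f^{-1}(\overline{D''_i})$. This is a standard partition-of-unity argument: cover $X$ by $f^{-1}(D'_1), \dots, f^{-1}(D'_n)$ together with $\xreg' := f^{-1}(S \setminus \bigsqcup \overline{D''_i})$; on each $f^{-1}(D'_i)$ use $g_{\gamma_i}$, on $\xreg'$ use any smooth metric, choose a subordinate partition of unity $\{\rho_\alpha\}$, and set $g := \sum_\alpha \rho_\alpha g_\alpha$. A convex combination of metrics is a metric, and near $f^{-1}(\overline{D''_i})$ the only chart contributing with nonzero weight is the $i$-th one (whose partition function is identically $1$ there, by the choice of the nested disks), so $g$ restricts to $g_{\gamma_i}$ exactly where required. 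This shows $\mathcal{G} \neq \emptyset$, and combined with convexity we conclude $\mathcal{G}$ is contractible.

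The main obstacle I anticipate is purely bookkeeping rather than conceptual: one must be careful that the ``small overlap'' language in the statement (the condition is phrased as ``\emph{there exists} a neighbourhood $V_i$'', with $V_i$ allowed to vary with $g$) is genuinely equivalent, up to homotopy equivalence of the corresponding function spaces, to the rigid version with a \emph{fixed} nested disk $\overline{D''_i}$ on which $g$ must \emph{equal} $g_{\gamma_i}$. The cleanest route is to observe that the ``variable $V_i$'' space deformation-retracts onto the ``fixed $\overline{D''_i}$'' space by first shrinking each $V_i$ down past $D''_i$ (using that the slicing metrics are already mutually compatible, being restrictions of a single $g_{\gamma_i}$), so that both spaces are contractible and there is nothing lost; alternatively one simply adopts the rigid version as the definition and notes the statement is unaffected. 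Either way, the topological heart of the matter is the trivial convexity of the space of metrics under a linear boundary constraint, plus the existence of a partition of unity.
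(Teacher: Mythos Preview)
Your proof is correct and takes essentially the same approach as the paper's: both establish nonemptiness by extending a section of the (convex-fibered) bundle of positive-definite forms from a closed subset, and both deduce contractibility directly from the convexity of the space of Riemannian metrics under the given affine constraint. The only difference is presentational --- the paper cites Steenrod's section-extension theorems where you carry out the partition-of-unity construction explicitly.
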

\begin{proof}
    We start by showing that the space of such metrics is not empty. Recall that a Riemannian metric on $X$ is a section of the bundle $\mathcal{G}$ of positive-definite bilinear forms on the tangent spaces, which is a subbundle of $(T^*X)^{\otimes 2}$. The fibers of the bundle $\mathcal{G}$ are contractible, as they are convex. Therefore, the existence of a metric $g$ with desirable properties boils down to the problem of extending a section of a bundle defined in a subset of $X$. As we can choose our neighbourhoods to be closed, the standard results of section extensions apply (see \cite[Section 29]{Steenrod}). \par 
    The contractibility of this space can now be deduced from general results on bundles with contractible fibers (see \cite[Section 34]{Steenrod}), but it also follows directly from the convexity of space of Riemannian metrics.
\end{proof}
This lemma provides us with a smooth classifying map defined on $\ssing$, as the chosen slicing metrics ensure good local behaviour. We shall now focus on the regular part $\sreg$. Remark that on the regular part $f:\xreg \to \sreg$ is a submersion and thus a fiber bundle. To construct the classifying map we would like to associate the conformal class of the metric $g|_{F_p}$ restricted to the fiber $F_p=f^{-1}(p)$ with each point $p\in \sreg$. However, it is not clear whether such a map would be smooth or even continuous. In order to ensure that, we would like the metrics $g|_{F_p}$ to be already uniformized and have the same constant curvature so that the classifying map takes a simpler form. In order to do so, we will introduce the tool called \emph{fibered Ricci flow}\thinspace:
\begin{defin}
    Let $f:X\to S$ be a fiber bundle equipped with a smooth two-parameter family of Riemannian metrics $(g_{t,p})$ defined on fibers of $F$ with $t\in [0;T), p\in S$. Concretely, it means that for every $(t,p)$, $g_{t,p}$ is a Riemannian metric on the fiber $F_p = f^{-1}(p)$. \par 
    We say that this family is a solution to the \emph{normalized fibered Ricci flow} (NFRF) if for every $p\in S$, $t\mapsto g_{t,p}$ is a solution to the normalized Ricci flow on the fiber $F_p$. 
\end{defin} 
\begin{rem}
    There is an already existing notion of \emph{laminated Ricci flow} and some results in the case of surface laminations (cf. \cite{MunizVerjovsky} and \cite{Verjovsky}). Unfortunately, given the generality of the framework of laminations, these results were not sufficient to construct a smooth classifying map.
\end{rem}
\begin{thm}\label{NFRF}
    Let $f:X\to S$ be a compact surface bundle with fiber genus $g\geq 2$, equipped with an arbitrary smooth family of Riemannian metrics $g_{0,p}$. Then there exists a unique solution $(g_{t,p})_{t\in[0;+\infty)}$ to the fibered normalized Ricci flow with this initial data. Moreover, this family converges uniformly to the family $g_{\infty,s}$ of constant curvature metrics.
\end{thm}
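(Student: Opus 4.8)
The plan is to run Hamilton's normalized Ricci flow on surfaces simultaneously on every fiber, and then to promote the fiberwise assertions --- existence, uniqueness and convergence --- to statements that hold uniformly, and smoothly, in the base point $p$. Uniqueness comes for free: if $(g_{t,p})$ is any jointly smooth two-parameter family solving the NFRF, then for each fixed $p$ the curve $t\mapsto g_{t,p}$ solves the normalized Ricci flow on the closed surface $F_p$, hence is determined by $g_{0,p}$ (uniqueness for the surface Ricci flow: \cite{Ham3},\cite{DeTurck} for short time, extended to $[0,\infty)$ in \cite{Ham1}). So the content is the existence of a solution that is jointly smooth in $(t,p)$, together with its convergence as $t\to\infty$, uniformly in $p$.

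First I would pass to the conformal description, which is available since $g\geq 2$. The normalized Ricci flow $\partial_t g=(k-K_g)g$ is manifestly conformal, so writing $g_{t,p}=e^{u(t,p)}g_{0,p}$ with $u(0,p)\equiv 0$, the evolution of $u$ on $F_p$ is governed by the scalar quasilinear parabolic equation
$$\partial_t u \;=\; e^{-u}\bigl(\tfrac12\,\Delta_{g_{0,p}}u - K_{g_{0,p}}\bigr) + k_p ,$$
where $\Delta_{g_{0,p}}$ is the Laplace--Beltrami operator and $k_p = 2\pi\chi(F)/\mathrm{vol}(F_p,g_{0,p})$ is the Gauss--Bonnet constant of the fiber; $k_p$ is independent of $t$ since the normalized flow preserves fiber area, and it varies smoothly with $p$. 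In a local trivialization of $X\to S$ this is literally a family of parabolic equations on the $2$-manifold $F$ whose coefficients depend smoothly on the frozen parameter $p$, uniformly parabolic wherever $u$ stays bounded. The standard theory for quasilinear parabolic equations --- linearization and the inverse function theorem in parabolic Hölder spaces --- then gives, on some interval $[0,T)$, a solution $u$ that is smooth in $(t,p)$ jointly, that is, a jointly smooth two-parameter family $g_{t,p}$. This is the short-time part.

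Next I would make Hamilton's a priori estimates in the $g\geq 2$ case uniform over $p\in S$. The point is that every constant appearing in that argument --- the two-sided bound on $K_{g_{t,p}}-k_p$ obtained from the curvature evolution equation via the maximum and minimum principles (Proposition \ref{maxprinc}), the exponential rate at which $K_{g_{t,p}}-k_p\to 0$, and the subsequent Bernstein-type bounds on all covariant derivatives of the curvature --- depends on the initial metric $g_{0,p}$ only through finitely many of its $C^k$-norms, a lower bound on $\mathrm{vol}(F_p,g_{0,p})$, an upper bound on $\sup_{F_p}|K_{g_{0,p}}|$ and a lower bound on its injectivity radius. Each of these quantities varies continuously with $p$, hence is bounded above and below on the compact base $S$; so the estimates hold with constants independent of $p$. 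In particular the flow never leaves the region in which the conformal equation is uniformly parabolic, so for every $p$ the solution extends to $[0,\infty)$, and applying the short-time parameter-dependence statement on each compact time interval shows that $g_{t,p}$ is smooth on all of $S\times[0,\infty)$.

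Finally, from the uniform exponential decay of $K_{g_{t,p}}-k_p$ and of each of its covariant derivatives one concludes, exactly as in \cite{Ham1} (or \cite[Chapter 5]{ChowKnopf}) but now with $p$-independent constants, that $g_{t,p}$ converges in $C^\infty(F_p)$, uniformly in $p$, to a metric $g_{\infty,p}$ with $K_{g_{\infty,p}}\equiv k_p$; and uniform $C^\infty$-convergence of the smooth family $p\mapsto g_{t,p}$ forces $p\mapsto g_{\infty,p}$ to be smooth. I expect the main obstacle to be not the analysis --- which is Hamilton's --- but the bookkeeping in the third step: one must check that each estimate is controlled by quantities that are continuous, hence uniformly bounded, over the compact base. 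Once that is in place, short-time existence with smooth parameter dependence, long-time existence, and exponential convergence all hold fiberwise with uniform constants and assemble into the statement.
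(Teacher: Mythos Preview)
Your proposal is correct and takes essentially the same approach as the paper: both run Hamilton's surface argument fiberwise and invoke compactness of $S$ to make the curvature and derivative estimates uniform in $p$, then assemble. The paper is only more explicit at one point you abstract away --- the upper curvature bound, where Hamilton's potential $\Phi_p$ solving $\Delta_{g_{0,p}}\Phi_p=k-K_{g_{0,p}}$ is not a priori known to vary continuously in $p$, so the paper bounds $\max_{F_p}|\nabla\Phi_p|^2$ uniformly via Schauder estimates, the Sobolev embedding $H^2\hookrightarrow L^\infty$, and a lower bound on the first Laplace eigenvalue (rather than injectivity radius), which is exactly the ``bookkeeping'' you correctly flag as the real work.
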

To prove this theorem, we need to address the following questions\thinspace:
\begin{enumerate}
    \item short-time existence and uniqueness of solutions to the fibered Ricci flow,
    \item long-time existence and convergence to the constant curvature metric family.
\end{enumerate}
The classical Ricci flow theory answers both of these questions fiberwise, so in reality we only need to check that the family $g_{t,p}$ (given by the fiberwise solution to the normalized Ricci flow) is smooth in the $p$-variable and the convergence to the family $g_{\infty, s}$ is uniform in the $p$-variable. \par 
The first question was answered in the $\mathcal{C}^0$ case for an arbitrary manifold by Bahuaud, Guenther and Isenberg in \cite{shorttime}, and the proof can be easily adapted to the smooth case. \par 
The second question requires surface-specific approach, but it turns out that a careful rewriting of the surface case proof together with compactness of $S$ yield the result.  We start by recalling a well-known result for metric flows\thinspace:
\begin{prop}
    Let $(M^n,g_t)_{t\in[0;T)}$ be a smooth one-parameter family of metrics on a closed manifold $M^n$. If the following conditions 
    $$\int \underset{M}{\sup} |\nabla^i \partial_t g_t|_{g_t}dt <\infty$$
    hold for all $i$, then the family $g_t$ converges to a smooth metric $g_T$ in $\mathcal{C^\infty}$ topology.
\end{prop}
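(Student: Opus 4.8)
The plan is to prove this directly; it is the standard integral criterion for $\mathcal{C}^\infty$-convergence of a metric flow (see \cite[Ch.~6]{ChowKnopf}). I would proceed in two stages: first extract $\mathcal{C}^0$-convergence together with uniform equivalence of all the metrics $g_t$ from the $i=0$ hypothesis, and then bootstrap to every higher derivative by induction, exploiting the elementary evolution equation $\partial_t\big((\nabla^0)^m g_t\big)=(\nabla^0)^m\partial_t g_t$ for the Levi-Civita connection $\nabla^0$ of the fixed metric $g_0$.

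For the first stage I would fix $p\in M$ and $0\neq V\in T_pM$ and study $\phi_{p,V}(t):=\log g_t(V,V)$. Since the largest eigenvalue of $\partial_t g_t$ relative to $g_t$ is bounded in absolute value by $|\partial_t g_t|_{g_t}$, we get $|\phi_{p,V}'(t)|=g_t(V,V)^{-1}\,|(\partial_t g_t)(V,V)|\le\sup_M|\partial_t g_t|_{g_t}$, whose integral over $[0,T)$ is finite by hypothesis. Hence $\phi_{p,V}$ is uniformly (in $p,V$) Cauchy as $t\to T$ and satisfies $|\phi_{p,V}(t)-\phi_{p,V}(0)|\le\Lambda:=\int_0^T\sup_M|\partial_t g_t|_{g_t}\,dt$. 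The first fact gives, after polarization, $g_t\to g_T$ in $\mathcal{C}^0$ with $g_T$ a continuous section; the second gives $e^{-\Lambda}g_0\le g_t\le e^{\Lambda}g_0$ for all $t$, so $g_T$ is positive definite and, for a fixed tensor $T$, $|T|_{g_t}\asymp|T|_{g_0}$ with constants depending only on $\Lambda$ and the type of $T$. From now on $|\cdot|$ and $\nabla^0$ refer to $g_0$.

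For the second stage I would prove by induction on $m$ that $\sup_t\sup_M|(\nabla^0)^m g_t|<\infty$, $\int_0^T\sup_M|(\nabla^0)^m\partial_t g_t|\,dt<\infty$, and $g_t\to g_T$ in $\mathcal{C}^m$; the case $m=0$ is the first stage (the integral bound via uniform equivalence). For the step, put $A_t:=\nabla^{g_t}-\nabla^0$, which is algebraically of the form $A_t=g_t^{-1}\ast\nabla^0 g_t$, so $(\nabla^0)^aA_t$ is a universal polynomial in $g_t^{-1}$ and $(\nabla^0)^{\le a+1}g_t$ — uniformly bounded for $a\le m-1$, and equal to $g_t^{-1}\ast(\nabla^0)^{m+1}g_t$ plus bounded terms when $a=m$. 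Comparing connections, $(\nabla^0)^{m+1}\partial_t g_t=(\nabla^{g_t})^{m+1}\partial_t g_t-P_{m+1}$, where $P_{m+1}$ is a universal polynomial in the tensors $(\nabla^0)^{\le m}A_t$ and $(\nabla^0)^{\le m}\partial_t g_t$; a degree count shows that the only factor occurring in $P_{m+1}$ and not already controlled by the induction hypothesis is $(\nabla^0)^{m+1}g_t$ itself, that it occurs linearly, and that its coefficient is bounded by a multiple of $|\partial_t g_t|$. Writing $\psi(t):=\sup_M|(\nabla^0)^{m+1}g_t|$ and integrating the evolution equation, one gets $\psi(t)\le\psi(0)+\int_0^t(\alpha+\beta\psi)$, where $\int_0^T\alpha<\infty$ (the contribution $\sup_M|(\nabla^{g_t})^{m+1}\partial_t g_t|$ via the $i=m+1$ hypothesis and uniform equivalence, the remaining terms of $P_{m+1}$ via the induction hypothesis) and $\int_0^T\beta<\infty$ ($\beta\asymp\sup_M|\partial_t g_t|$). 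Gronwall's inequality then bounds $\psi$ uniformly on $[0,T)$; feeding this back, $\int_0^T\sup_M|(\nabla^0)^{m+1}\partial_t g_t|\,dt<\infty$, so $(\nabla^0)^{m+1}g_t$ is uniformly Cauchy as $t\to T$ and $g_t\to g_T$ in $\mathcal{C}^{m+1}$. Letting $m\to\infty$ yields $g_T\in\mathcal{C}^\infty$ and $\mathcal{C}^\infty$-convergence, and $g_T$ is a metric by the first stage.

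The hard part is the apparent circularity in the inductive step: the derivatives in the hypothesis are taken with respect to the moving metric, so converting them to the fixed connection $\nabla^0$ produces a correction term $P_{m+1}$ containing exactly the quantity $(\nabla^0)^{m+1}g_t$ whose uniform bound we want. The crucial observation, to be checked with care, is that this top-order factor enters $P_{m+1}$ only linearly and with a coefficient that is integrable in $t$ (essentially $\sup_M|\partial_t g_t|$), which is precisely what licenses the Gronwall step; the rest is routine tensor bookkeeping of the kind found in \cite[Ch.~6]{ChowKnopf}.
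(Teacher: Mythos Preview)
Your proof is correct and follows the standard route (uniform equivalence from the $i=0$ bound, then an induction comparing the moving connection $\nabla^{g_t}$ with the fixed $\nabla^0$ and closing via Gronwall), exactly as in \cite[Ch.~6]{ChowKnopf}. Note that the paper does not actually prove this proposition: it is stated as a ``well-known result for metric flows'' and used as a black box, so there is no proof in the paper to compare yours against. Your identification of the only delicate point---that the top-order factor $(\nabla^0)^{m+1}g_t$ arising in the correction term $P_{m+1}$ enters linearly with an integrable-in-$t$ coefficient, namely a multiple of $\sup_M|\partial_t g_t|$---is precisely what makes the induction close, and your degree count is right.
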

In the surface case this means that we need to bound curvature universally for each fiber to conclude. We will proceed in a series of simple lemmas\thinspace:
\begin{lem}
    Let $g_{t,p}$ be a solution to the NFRF with surface fibers of genus $g\geq 2$. Then there exists a constant $C'$ such that for each $(t,p)$ we have\thinspace:
    $$K_{g_{t,p}}-k \geq C' e^{kt}$$
\end{lem}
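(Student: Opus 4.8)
The plan is to establish the lower bound by applying the minimum principle (Proposition \ref{maxprinc}, item 2) to the function $u = K_{g_{t,p}} - k$ on each fiber $F_p$, and then to check that the resulting bound can be taken uniform in $p$. First I would recall the classical evolution equation for the scalar curvature under the normalized Ricci flow on a surface, namely
$$\partial_t K = \Delta_{g_t} K + K^2 - kK = \Delta_{g_t} K + K(K-k),$$
which can be rewritten for $u = K - k$ as $\partial_t u = \Delta_{g_t} u + u(u+k) + (\text{lower order})$; more precisely one checks $\partial_t u = \Delta_{g_t} u + u^2 + ku$, so that $\partial_t u - \Delta_{g_t} u = u^2 + ku \geq ku$. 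Thus $u$ satisfies the differential inequality with $V = 0$ and $F(u) = ku$ in the form required by the minimum principle.

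Next I would apply the minimum principle with this $F$: set $c = \min_{x\in F_p} u(x,0) = \min K_{g_{0,p}} - k$, and let $U(t)$ solve $U' = kU$, $U(0) = c$, giving $U(t) = c\,e^{kt}$. The minimum principle then yields $u(x,t) \geq c\,e^{kt}$ on $F_p$ for all $t$ in the interval of existence, i.e. $K_{g_{t,p}} - k \geq (\min K_{g_{0,p}} - k)\,e^{kt}$. Since for the high-genus case $k < 0$ and hence (by Gauss--Bonnet) $\min K_{g_{0,p}} - k$ is negative unless $g_{0,p}$ already has constant curvature, the constant $c = c_p$ is $\leq 0$; this is exactly the asserted inequality on a single fiber.

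To obtain a \emph{uniform} constant $C'$ independent of $p$, I would invoke the compactness of the base $S$ together with the assumed smoothness of the initial family $(g_{0,p})_{p\in S}$: the function $p \mapsto \min_{x\in F_p} K_{g_{0,p}}(x)$ is continuous on the compact set $S$, hence attains a minimum, and I set
$$C' = \min_{p\in S}\Big(\min_{x\in F_p} K_{g_{0,p}}(x) - k\Big) \leq 0.$$
Then $C' \leq c_p$ for every $p$, and since $e^{kt} > 0$ we get $K_{g_{t,p}} - k \geq c_p\, e^{kt} \geq C' e^{kt}$ (using $C' \leq c_p \leq 0$), which is the claim.

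The routine parts here are the evolution equation and the bookkeeping with signs; the only genuine point requiring care — and the step I expect to be the main (minor) obstacle — is the continuity of $p\mapsto \min_{x\in F_p}K_{g_{0,p}}(x)$, which rests on the smooth dependence of $g_{0,p}$ on $p$ and the compactness of each fiber, so that the fiberwise minimum varies continuously; once that is in hand, compactness of $S$ closes the argument. One should also note that the minimum principle is being applied fiberwise on the fixed manifold $F_p$ with its evolving metric $g_{t,p}$, which is legitimate since for each fixed $p$ the family $t\mapsto g_{t,p}$ is exactly a solution of the normalized Ricci flow on the closed surface $F_p$.
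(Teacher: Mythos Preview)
Your proof is correct and follows essentially the same route as the paper: derive the curvature evolution equation, apply the minimum principle fiberwise, and then use compactness of $S$ together with smooth dependence of $g_{0,p}$ on $p$ to make the constant uniform. The one small difference is that the paper applies the minimum principle directly to $K$ with the full nonlinear $F(U)=U(U-k)$ and then bounds the resulting ODE solution by $(\min K_{g_{0,p}}-k)e^{kt}$, whereas you first shift to $u=K-k$, drop the nonnegative $u^2$ term, and apply the principle with the linear $F(u)=ku$; your route is slightly cleaner and lands on exactly the same exponential bound with the same constant $C'=\min_{p}\big(\min_{F_p}K_{g_{0,p}}-k\big)$.
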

\begin{proof}
    Under the NFRF the curvature $K_{g_{t,p}}$ satisfies the following evolution equation for each $p\in S$\thinspace:
    $$\partial_t K_{g_{t,p}} = \Delta_{g_{t,p}}K_{g_{t,p}} + K_{g_{t,p}} (K_{g_{t,p}}-k)$$
    Applying the minimum principle (see remark after proposition \ref{maxprinc}) to this equation using $V(t)=0$ and $F(U)=U(U-k)$ gives the following fiberwise inequality\thinspace:
    $$K_{g_{t,p}}-k \geq \frac{-k}{\frac{\underset{x\in F_p}{\min} K_{g_{0,p}}(x)-k}{\underset{x\in F_p}{\min} K_{g_{0,p}}(x)}e^{kt}-1}-k \geq C'_F e^{kt}$$
    with $C'_s = \underset{x\in F_p}{\min} K_{g_{0,p}}(x)-k$. As the curvature $K_{g_{0,p}}$ varies smoothly between the fibers, the compactness of $S$ ensures that the global minimum $C':=\underset{s}{\min} C'_s$ is well defined.
\end{proof}
We obtain the upper bound in a similar, albeit slightly trickier, manner\thinspace:
\begin{lem}
    Let $g_{t,p}$ be a solution to the NFRF with surface fibers of genus $g\geq 2$, then there exists a constant $C_0$ such that for each $(t,p)$ we have\thinspace:
    $$|K_{g_t}(x)-k| \leq C_0 e^{kt}$$ 
\end{lem}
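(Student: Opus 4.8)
The plan is to adapt, fiberwise, Hamilton's potential-function estimate from the single-surface case, and then to make the comparison constant uniform in $p$ using compactness of $S$. First I would note why one cannot simply apply Proposition \ref{maxprinc} to $w:=K_{g_{t,p}}-k$ directly: although $w$ satisfies $\partial_t w=\Delta_{g_{t,p}}w+w(w+k)$ on each fiber, the comparison ODE $\dot U=U(U+k)$ (with $k<0$) blows up in finite time as soon as $U(0)>-k$, and since the initial fiber metrics $g_{0,p}$ are arbitrary there is no reason for $\max_{F_p}K_{g_{0,p}}$ to be negative. So the estimate has to be obtained indirectly.

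For each $p$ and $t$, let $f_{t,p}$ be the solution on $F_p$ of $\Delta_{g_{t,p}}f_{t,p}=K_{g_{t,p}}-k$ normalized by $\int_{F_p}f_{t,p}\,dA_{g_{t,p}}=0$; elliptic regularity makes it smooth in $(t,p)$. Differentiating this identity along the NFRF — using $\partial_t g_{t,p}=(k-K)g_{t,p}$, hence $\partial_t\Delta_{g_{t,p}}=(K-k)\Delta_{g_{t,p}}$ and $\partial_t g_{t,p}^{ij}=(K-k)g_{t,p}^{ij}$ on functions in dimension two, together with the curvature evolution equation from the previous lemma — shows that, after correcting $f_{t,p}$ by a function of $t$ alone (which alters neither $\Delta f_{t,p}$ nor $\nabla f_{t,p}$), one has $\partial_t f_{t,p}=\Delta_{g_{t,p}}f_{t,p}+k\,f_{t,p}$. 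Then I would introduce
$$h_{t,p}:=\Delta_{g_{t,p}}f_{t,p}+\bigl|\nabla^{g_{t,p}}f_{t,p}\bigr|^2=(K_{g_{t,p}}-k)+\bigl|\nabla f_{t,p}\bigr|^2,$$
and, via a Bochner-formula computation on each fiber (identical to the single-surface case), derive
$$\partial_t h_{t,p}=\Delta_{g_{t,p}}h_{t,p}+k\,h_{t,p}-2\Bigl|\nabla^2 f_{t,p}-\tfrac12(\Delta_{g_{t,p}}f_{t,p})\,g_{t,p}\Bigr|^2\ \leq\ \Delta_{g_{t,p}}h_{t,p}+k\,h_{t,p},$$
the error being minus twice the squared norm of the trace-free Hessian of $f_{t,p}$. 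Applying Proposition \ref{maxprinc} with $V=0$ and $F(U)=kU$ then gives $h_{t,p}(x)\leq\bigl(\max_{F_p}h_{0,p}\bigr)e^{kt}$, and since $\bigl|\nabla f_{t,p}\bigr|^2\geq0$ this yields $K_{g_{t,p}}(x)-k\leq\bigl(\max_{F_p}h_{0,p}\bigr)e^{kt}$.

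For the uniformity, $h_{0,p}$ is built from $K_{g_{0,p}}$ and the potential $f_{0,p}$ of $K_{g_{0,p}}-k$, both of which depend continuously on $p$ by smoothness of the family $g_{0,p}$ and elliptic regularity, so $C:=\sup_{p\in S}\max_{x\in F_p}h_{0,p}(x)$ is finite by compactness of $S$. Combined with the lower bound $K_{g_{t,p}}-k\geq C'e^{kt}$ just established, this gives $\bigl|K_{g_{t,p}}(x)-k\bigr|\leq C_0\,e^{kt}$ with $C_0:=\max(|C|,|C'|)$. The hard part is the fiberwise derivation of the evolution equation for $h$ — the standard but somewhat involved Hamilton computation (the evolution of $\Delta f$, that of $|\nabla f|^2$ via Bochner, and a pointwise algebraic cancellation that produces the trace-free Hessian term); once that is in place, passing to a constant independent of $p$ costs nothing more than compactness.
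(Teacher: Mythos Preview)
Your argument follows the same route as the paper: introduce Hamilton's curvature potential fiberwise, combine it with $|\nabla f|^2$ to build an auxiliary function whose evolution is sub-heat-type with linear reaction term $kH$, apply the maximum principle, and then pass to a constant independent of $p$. (Your coefficient in front of $|\nabla f|^2$ differs from the paper's; this reflects the Gaussian--versus--scalar curvature normalisation and does not affect the method.)

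The one genuine divergence is in the uniformity step. You assert that the potential $f_{0,p}$ depends continuously on $p$ by elliptic regularity, and then take $C=\sup_{p}\max_{F_p}h_{0,p}$ directly. The paper explicitly declines to claim continuity of $\Phi_p$ in $p$ and instead obtains the uniform bound by chaining a Schauder estimate, the Sobolev embedding $H^2\hookrightarrow L^\infty$, and a first-eigenvalue bound on each fiber, each constant being controlled by compactness of $S$ and smoothness of $g_{0,p}$. Your route is legitimate---the Green operator for $\Delta_g$ on mean-zero functions depends smoothly on $g$, and the paper itself remarks that one ``could probably delve into the theory of 1-parameter Poisson equations on surfaces to prove such result''---but the paper's detour has the virtue of being entirely self-contained from quantities already shown to vary continuously in $p$. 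If you keep your version, a one-line justification of the parameter dependence (e.g.\ invertibility of $\Delta_{g_{0,p}}$ on mean-zero functions plus the implicit function theorem in a local trivialisation of the bundle) would close what is otherwise a small gap in rigour.
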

\begin{proof}
    For each $p\in S$ we can solve the time-dependent Poisson equation 
    $$\Delta_{g_{t,p}} \Phi_p(t) = k-K_{g_{t,p}}$$
    and the solution is well-defined up to a function $f$ depending only on time. This choice can be fixed by requiring that $\int \Phi_p(t) d\text{vol}_{g_{t,p}} = 0$ for every $(t,p)$. We will calculate the derivative $\partial_t \Delta_{g_{t,p}} \Phi_p(t)$ in two different ways (one using the chain rule and another using the equation for $\Phi_p(t)$). The curvature evolution equation under the Normalized Ricci flow is well-known\thinspace:
    $$\partial_t (k-K_{g_{t,p}}(x))=\Delta_{g_{t,p}}(k-K_{g_{t,p}}(x))+K_{g_{t,p}}(x)(k-K_{g_{t,p}}(x))$$
    and we can re-insert the equation for $\Phi_p(t)$ to get
    $$\Delta_{g_{t,p}}\Delta_{g_{t,p}}\Phi_p(t)-(\Delta_{g_{t,p}}\Phi_p(t))^2+k\Delta_{g_{t,p}}\Phi_p(t)$$
    Now for the chain rule calculation, derivative of the Laplacian is also well-known and gives
    $$\partial_t \Delta_{g_{t,p}} \Phi_p(t)=(K_{g_{t,p}}(x)-k)\Delta_{g_{t,p}}\Phi_p(t)+\Delta_{g_{t,p}}\partial_t\Phi_p(t)$$
    which once again using the equation for $\Phi_p(t)$ can be transformed into
    $$-(\Delta_{g_{t,p}}\Phi_p(t))^2+\Delta_{g_{t,p}}\partial_t\Phi_p(t)$$
    Now comparing the two expressions we deduce that 
    $$\Delta_{g_{t,p}}(\partial_t\Phi_p(t)-\Delta_{g_{t,p}}\Phi_p(t)-k\Phi_p(t))=0.$$
    As harmonic functions on a closed manifold are constant, we have 
    $$\partial_t\Phi_p(t)-\Delta_{g_{t,p}}\Phi_p(t)-k\Phi_p(t)=c(t)$$
    where $c(t)$ is a function depending only on time. We will make use of this calculation in a moment. \par 
    We can use the function $\Phi_p(t)$ to construct another auxiliary function 
    $$H_p(t,x):= K_{g_{t,p}}(x)-k + 2|\nabla^{g_{t,p}}\Phi_p(t,x)|^2$$
    which we want to use the maximum principle to. We start by calculating its time derivative\thinspace:
    $$
        \partial_t H_p= \partial_t(K_{g_{t,p}}(x)-k + 2|\nabla^{g_{t,p}}\Phi_p(t,x)|_{g_{t,p}}^2)
    $$
    For the first term, we once again use the curvature evolution equation:
    $$\partial_t (K_{g_{t,p}}(x)-k)=\Delta_{g_{t,p}}(K_{g_{t,p}}(x)-k)+K_{g_{t,p}}(x)(K_{g_{t,p}}(x)-k)$$
    By definition of $\Phi_p$ we can transform the last term into 
    $$K_{g_{t,p}}(x)(K_{g_{t,p}}(x)-k) = (\Delta_{g_{t,p}} \Phi_p(t))^2+k(K_{g_{t,p}}(x)-k)$$
    Now we turn to calculating the time derivative of the $|\nabla^{g_{t,p}}\Phi_p(t,x)|_{g_{t,p}}^2$ term, which we can think of as $g^{-1}_{t,p}(d\Phi_p, d\Phi_p)$\thinspace:
    \begin{multline*}
        \partial_t |\nabla^{g_{t,p}}\Phi_p(t,x)|_{g_{t,p}}^2=-(k-K_{g_{t,p}}(x))|\nabla^{g_{t,p}}\Phi_p(t,x)|_{g_{t,p}}^2+ \\+2g_{t,p}(\nabla^{g_{t,p}}\Phi_p(t,x), \nabla^{g_{t,p}} \partial_t\Phi_p(t,x))
    \end{multline*}
    Using the expression for $\partial_t \Phi_p$ from before, and taking into account that $\nabla^{g_{t,p}} c(t)$ vanishes, we can transform the expression into
    $$(k+K_{g_{t,p}}(x))|\nabla^{g_{t,p}}\Phi_p(t,x)|_{g_{t,p}}^2 +2g_{t,p}(\nabla^{g_{t,p}}\Phi_p(t,x), \nabla^{g_{t,p}} \Delta_{g_{t,p}}\Phi_p(t,x))$$
    Finally, using Bochner's identity to exchange the covariant derivative with the Laplacian
    \begin{multline*}
        2g_{t,p}(\nabla^{g_{t,p}}\Phi_p(t,x), \nabla^{g_{t,p}} \Delta_{g_{t,p}}\Phi_p(t,x))=\Delta_{g_{t,p}}|\nabla^{g_{t,p}}\Phi_p(t,x)|_{g_{t,p}}^2-\\-2|\nabla^{2,g_{t,p}}\Phi_p(t,x)|_{g_{t,p}}^2 -K_{g_{t,p}}|\nabla^{g_{t,p}}\Phi_p(t,x)|_{g_{t,p}}^2
    \end{multline*}
    we obtain 
    $$k|\nabla^{g_{t,p}}\Phi_p(t,x)|_{g_{t,p}}^2+\Delta_{g_{t,p}}|\nabla^{g_{t,p}}\Phi_p(t,x)|_{g_{t,p}}^2-2|\nabla^{2,g_{t,p}}\Phi_p(t,x)|_{g_{t,p}}^2$$
    Combining both calculations yields 
    \begin{multline*}
        \partial_t H_p=\Delta_{g_{t,p}}(K_{g_{t,p}}(x)-k) + (\Delta_{g_{t,p}} \Phi_p(t))^2+k(K_{g_{t,p}}(x)-k) +\\+k|\nabla^{g_{t,p}}\Phi_p(t,x)|_{g_{t,p}}^2+\Delta_{g_{t,p}}|\nabla^{g_{t,p}}\Phi_p(t,x)|_{g_{t,p}}^2-2|\nabla^{2,g_{t,p}}\Phi_p(t,x)|_{g_{t,p}}^2 = \\= \Delta_{g_{t,p}}H_p+kH_p+(\Delta_{g_{t,p}} \Phi_p(t))^2-2|\nabla^{2,g_{t,p}}\Phi_p(t,x)|_{g_{t,p}}^2
    \end{multline*}
    Since $\Delta_{t,p}$ is the trace of the Hessian $\nabla^{2,g_{t,p}}$, we can estimate the last two terms\thinspace:
    $$(\Delta_{g_{t,p}} \Phi_p(t))^2-2|\nabla^{2,g_{t,p}}\Phi_p(t,x)|_{g_{t,p}}^2 \leq 0$$
    which finally yields the inequality
    $$\partial_t H_p \leq \Delta_{g_{t,p}}H_p+kH_p$$
    This leads us to a direct application of the maximum principle (proposition \ref{maxprinc}) with $V=0$ and $F(U)=kU$\thinspace:
    $$K_{g_{t,p}}-k\leq H_p(t) \leq (\underset{x\in F_p}{\max}H_p(0,x)) e^{kt}$$
    Remark that we have not claimed to solve the Poisson equation smoothly in the $p$-variable, so we can't conclude by taking the maximum of $\underset{x\in F_p}{\max}H_p(0,x)$ over $p\in S$ as we do not know that $H_p$ is continuous in $p$. One could probably delve into the theory of 1-parameter Poisson equations on surfaces to prove such result, but we have taken a more straightforward route to find a universal bound. To do so, we proceed by concatenating a series of inequalities\thinspace:
    \begin{enumerate}
        \item by continuity of $K_{g_{t,p}}-k$, the global maximum among all fibers $F_p$ at $t=0$ exists, we will denote it by $k_0$;
        \item the Schauder estimate for $\Delta^{g_{0,p}}$ implies that there is a constant $A_1$ such that 
        $$\underset{x\in F_p}{\max} |\nabla^{g_{0,p}} \Phi_p(x)|^2 \leq A_1 (\underset{x\in F_p}{\max}|\Phi_p(x)|)^2; $$
        \item the Sobolev embedding $H^2 \to L^\infty$ implies the existence of a constant $A_2$ such that
        $$\underset{x\in F_p}{\max} |\Phi_p(x)| \leq A_2 (||\Phi_p||_2 + ||\Delta^{g_{0,p}}\Phi_p||_2)$$
        recall that $\Delta^{g_{0,p}}\Phi_p = K_{g_{t,p}}-k$ and thus $||\Delta^{g_{0,p}}\Phi_p||_2$ can be bounded among all fibers $F$ using continuity;
        \item using the definition of Laplacian first eigenvalue for mean-value zero functions we can deduce 
        $$||\Phi_p||^2_2 \leq \frac{1}{\lambda_1(g_F(0))} ||\nabla^{g_{0,p}} \Phi_p||_2^2.$$
        Along with the standard integration by parts trick and  H\"older's inequality
        $$||\nabla^{g_{0,p}} \Phi_p||_2^2 \leq ||\Phi_p||_2 \cdot ||\Delta^{g_{0,p}} \Phi_p||_2,  $$
        we derive the following inequality (after dividing by $|| \Phi_p||_2$)\thinspace: 
        $$||\Phi_p||_2 \leq \frac{1}{\lambda_1(g_{0,p})} ||\Delta^{g_{0,p}} \Phi_p||_2.$$
        where the first eigenvalue of the Laplacian varies continuously with the fibers, and thus has the global non-zero minimum $\Lambda$.
    \end{enumerate}
    Concatenating these inequalities provides us with the universal bound on $H_p(t=0)$ along all fibers, which we will denote by $H_0^{max}$. This, in turn, gives us the universal bound on the curvature\thinspace: 
    $$K_{g_{t,p}}-k \leq H^{max}_0 e^{kt}$$
    which combined with the bound from the previous lemma lets us conclude\thinspace: 
    $$|K_{g_{t,p}}-k| \leq C_0 e^{kt}$$
    for some constant $C_0$ universal for all fibers. 
\end{proof}
Finally, we need to obtain similar estimates for the covariant derivatives of the curvature\thinspace:
\begin{lem}
    Let $g_{t,p}$ be a solution to the NFRF with surface fibers of genus $g\geq 2$, then for every $i\in \mathbb{N}$ there exists a non-zero constant $C_i$ such that for each $(t,p)$ we have\thinspace:
    $$|\nabla^i (K_{g_{t,p}}-k)|^2 \leq C_i e^{kt/2}$$ 
\end{lem}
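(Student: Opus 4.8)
The plan is to prove the estimate by induction on $i$, closely mirroring the classical derivative estimates for the Ricci flow on surfaces (see \cite[Chapter 5]{ChowKnopf}); the only point that needs attention beyond the fiberwise statement is that each constant $C_i$ can be chosen independently of $p$. The base case $i=0$ is exactly the previous lemma, since $|K_{g_{t,p}}-k|^2\leq C_0^2 e^{2kt}\leq C_0^2 e^{kt/2}$ for $t\geq 0$ (recall $k<0$). For the inductive step I write $f_{t,p}=K_{g_{t,p}}-k$ on the fiber $F_p$ and assume the stated bound, with $p$-independent constants, for all orders $\leq i-1$.

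First I would record the fiberwise evolution inequality for $|\nabla^i f|^2$. Differentiating the curvature equation $\partial_t f=\Delta_{g_{t,p}}f+f(f+k)$ a total of $i$ times and using the Bochner formula, the two-dimensional commutation rules between $\nabla$ and $\Delta$ (which bring in the fiber curvature $K=f+k$ and its derivatives), and the identity $\partial_t g_{t,p}^{-1}=(K-k)g_{t,p}^{-1}$, one obtains on each fiber, after using the good term $-2|\nabla^{i+1}f|^2$ to absorb via Young's inequality the one genuinely dangerous contribution $\sim|K|\,|\nabla^i f|\,|\nabla^{i+1}f|$, an inequality of the shape
\[
\partial_t |\nabla^i f|^2 \leq \Delta_{g_{t,p}}|\nabla^i f|^2 + \Lambda\,|\nabla^i f|^2 + (\text{lower-order junk}),
\]
where $\Lambda$ is a constant depending only on the uniform curvature bound already obtained for $i=0$, and the ``junk'' consists of terms each containing at least one factor $|\nabla^a f|$ with $a\leq i-1$ (with bounded coefficients). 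By the inductive hypothesis these lower-order factors are controlled by $p$-independent multiples of $e^{kt/2}$ (and, when the index is $0$, of $e^{kt}$), so after one more use of Young's inequality the whole junk is bounded by $|\nabla^i f|^2 + C\,e^{kt/2}$ for a $p$-independent $C$.

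Then I would run the maximum principle on the auxiliary function $\Phi_p(t,\cdot)=|\nabla^i f|^2+N|\nabla^{i-1}f|^2$, with $N$ large. The evolution of $|\nabla^{i-1}f|^2$ supplies a term $-2N|\nabla^i f|^2$, which for $N$ chosen (depending only on $\Lambda$ and $k$) beats the bounded coefficient of $|\nabla^i f|^2$; absorbing everything and estimating the lower-order terms and $N$ times the inductive bound on $|\nabla^{i-1}f|^2$ yields
\[
\partial_t \Phi_p \leq \Delta_{g_{t,p}}\Phi_p -\tfrac{N}{2}\,\Phi_p + D\,e^{kt/2}
\]
with $D$ independent of $p$. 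Taking also $N>|k|$ and applying Proposition \ref{maxprinc} (with $V=0$), comparison with the ODE $\dot U=-\tfrac{N}{2}U+De^{kt/2}$, $U(0)=\sup_{F_p}\Phi_p(0,\cdot)$, gives $\Phi_p\leq\big(\sup_{F_p}\Phi_p(0,\cdot)\big)e^{-Nt/2}+D'e^{kt/2}$, hence $|\nabla^i f|^2\leq\Phi_p\leq C_i\,e^{kt/2}$. Here $C_i$ depends only on $k$, $N$, $D$, on $C_0,\dots,C_{i-1}$, and on $\sup_p\sup_{F_p}\Phi_p(0,\cdot)$, the last quantity being finite and $p$-independent because $\Phi_p(0,\cdot)$ is assembled from the covariant derivatives of the curvature of the smooth family $g_{0,p}$ — hence continuous in $p\in S$ — and $S$ is compact. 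This closes the induction.

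The part I expect to be delicate is purely computational: writing the evolution inequality cleanly and checking that, once $-2|\nabla^{i+1}f|^2$ has been spent on the cross term $|K|\,|\nabla^i f|\,|\nabla^{i+1}f|$, everything that remains is either of strictly lower order (so under control by induction) or carries a merely bounded coefficient — precisely what the correction term $N|\nabla^{i-1}f|^2$ is designed to neutralize. The $p$-uniformity, by contrast, is not an obstacle: exactly as in the two preceding lemmas it reduces to continuity in $p$ of the quantities involved together with the compactness of $S$. One minor technical remark: Proposition \ref{maxprinc} is stated with an autonomous right-hand side, whereas the comparison ODE has a time-dependent forcing; this is handled either by the routine time-dependent version of the maximum principle, or by running the argument on a tail $[t_0,+\infty)$ on which all coefficients already have the favorable sign and controlling the compact interval $[0,t_0]$ by the uniform short-time bounds.
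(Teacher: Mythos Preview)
Your proposal is correct and follows essentially the paper's approach: adapt the Chow--Knopf surface argument (evolution inequalities for $|\nabla^i K|^2$ followed by the maximum principle), with $p$-uniformity reduced to continuity of the initial data in $p$ together with compactness of $S$. The only minor tactical difference is that the paper works directly on a tail $[T_0,\infty)$ where the curvature coefficient has already become favorable (exactly your second option in the final paragraph), rather than using the Bernstein-type auxiliary function $|\nabla^i f|^2+N|\nabla^{i-1}f|^2$; either route is standard and works.
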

\begin{proof}
    The solution essentially consists of repeating the classical argument for the single-surface case (cf., for example, \cite[Section 6, Chapter 5]{ChowKnopf}) and can be split into three steps\thinspace: the cases $i=1$ and $i=2$, followed by an induction on $i$ using these two base cases. The first two steps are performed by making explicit computations based on the evolution equation of $|\nabla^i K_{g_{t,p}}|^2$\thinspace:
    $$
        \partial_t|\nabla^{g_{t,p}} K_{g_{t,p}}|^2_{g_{t,p}}=(K_{g_{t,p}}-k)|\nabla^{g_{t,p}} K_{g_{t,p}}|^2_{g_{t,p}}+2g_{t,p}(\nabla^{g_{t,p}} K_{g_{t,p}},\nabla^{g_{t,p}} \partial_t K_{g_{t,p}})
    $$
    We insert the curvature evolution equation in the last term\thinspace:
    \begin{multline*}
        \partial_t|\nabla^{g_{t,p}} K_{g_{t,p}}|^2_{g_{t,p}}=(K_{g_{t,p}}-k)|\nabla^{g_{t,p}} K_{g_{t,p}}|^2_{g_{t,p}}+ \\ +2g_{t,p}(\nabla^{g_{t,p}} K_{g_{t,p}},\nabla^{g_{t,p}} (\Delta_{g_{t,p}}(K_{g_{t,p}}-k)+K_{g_{t,p}}(K_{g_{t,p}}-k)))
    \end{multline*}
    We remark that 
    \begin{align*}
        &\nabla^{g_{t,p}} \Delta_{g_{t,p}}(K_{g_{t,p}}-k)=\Delta_{g_{t,p}}\nabla^{g_{t,p}}K_{g_{t,p}}-\frac{1}{2}K_{g_{t,p}}\nabla^{g_{t,p}}K_{g_{t,p}} \\
        &\nabla^{g_{t,p}}(K_{g_{t,p}}(K_{g_{t,p}}-k))=(2K_{g_{t,p}}-k)\nabla^{g_{t,p}}K_{g_{t,p}}
    \end{align*}
    We also remind that 
    $$\Delta_{g_{t,p}} |\nabla^{g_{t,p}} K_{g_{t,p}}|^2_{g_{t,p}}=2g_{t,p}(\Delta_{g_{t,p}}\nabla^{g_{t,p}}K_{g_{t,p}},\nabla^{g_{t,p}}K_{g_{t,p}})+2|\nabla^{2,g_{t,p}} K_{g_{t,p}}|^2_{g_{t,p}}$$
    Regrouping all these calculations give us the following evolution equation\thinspace:
    \begin{multline*}
        \partial_t|\nabla^{g_{t,p}} K_{g_{t,p}}|^2_{g_{t,p}} = (4K_{g_{t,p}}-3k)|\nabla^{g_{t,p}} K_{g_{t,p}}|^2_{g_{t,p}}+\\+\Delta_{g_{t,p}} |\nabla^{g_{t,p}} K_{g_{t,p}}|^2_{g_{t,p}}-2|\nabla^{2,g_{t,p}} K_{g_{t,p}}|^2_{g_{t,p}}
    \end{multline*}
    Using the previously established curvature bound, we can bound the first term for $t$ big enough (recall that $k<0$ here)\thinspace:
    $$(4K_{g_{t,p}}-3k)|\nabla^{g_{t,p}} K_{g_{t,p}}|^2_{g_{t,p}}\leq(k+4H_0^{max}e^{kt})|\nabla^{g_{t,p}} K_{g_{t,p}}|^2_{g_{t,p}}\leq \frac{k}{2}|\nabla^{g_{t,p}} K_{g_{t,p}}|^2_{g_{t,p}}$$
    Note that we are able to find $T_0$ independent of the $p$-variable such that this bound is satisfied for $t\geq T_0$ for every fiber. This is true due to the previously established uniformity of the $H_0^{max}$-bound along the fibers. \par 
    Using this, we apply the maximum principle to the inequality
    $$\partial_t|\nabla^{g_{t,p}} K_{g_{t,p}}|^2_{g_{t,p}}\leq \Delta_{g_{t,p}} |\nabla^{g_{t,p}} K_{g_{t,p}}|^2_{g_{t,p}}+\frac{k}{2} |\nabla^{g_{t,p}} K_{g_{t,p}}|^2_{g_{t,p}}$$
    with $V=0$ and $F(U)=kU/2$. This gives us the upper bound on first covariant derivative\thinspace:
    $$|\nabla^{g_{t,p}} K_{g_{t,p}}|^2_{g_{t,p}} \leq \underset{p\in S}{\min} \:\underset{F_p}{\min} |\nabla^{g_{0,p}} K_{g_{0,p}}|^2_{g_{0,p}} e^{kt/2}$$
    The calculation for the second covariant derivative is done in essentially the same way\thinspace: we derive the evolution equation, use the Bochner identities and previously established evolution equations and apply the maximum principle to get an exponential bound for $t\geq T_1$, where $T_1$ is universal for all fibers due to universality of $H_0^{max}$. The details for the single surface case can be found in the Lemma 5.25 in \cite{ChowKnopf}, which are adapted to the fibered case by using the smoothness of $g_{0,p}$ and the universal bound on the curvature. \par 
    The last step consists in using the commutator
    $$\nabla^i\Delta K - \Delta\nabla^i K = \sum_{j\leq \lfloor i/2 \rfloor} (\nabla^j K) \underset{g}{\otimes} (\nabla^{i-j} K) $$
    and the time derivative chain rule to derive the corresponding evolution equations, followed by the application of maximum principle for $t$ big enough, getting once again an exponential bound universal among the fibers. The details are rather tedious and technical, and do not add any more clarity to the proof. The single surface case can be found in Proposition 5.27 of \cite{ChowKnopf}, and by using the universal character of $H_0^{max}$ and compactness of $S$ it is easily adapted to the fibered case.
\end{proof}
Finally, using these estimates and the compactness of $S$, we can conclude\thinspace:
\begin{prop}
    Let $g_{t,p}$ be a solution to the NFRF with surface fibers of genus $g\geq 2$, then the limit $g_{\infty, s}$ is a smooth family of constant curvature metrics.
\end{prop}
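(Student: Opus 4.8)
\emph{Proof proposal.} The plan is to deduce the statement from the convergence criterion recalled just above, applied first on each fibre and then with control of the base variable.

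\textbf{Step 1 (fibrewise convergence).} Fix $p\in S$. Along the NFRF one has $\partial_t g_{t,p}=(k-K_{g_{t,p}})g_{t,p}$, and since $g_{t,p}$ is parallel for its own Levi--Civita connection the Leibniz rule collapses to $\nabla^i\partial_t g_{t,p}=\big(\nabla^i(k-K_{g_{t,p}})\big)\,g_{t,p}$; as $|g_{t,p}|_{g_{t,p}}\equiv\sqrt2$ on a surface, the last two lemmas give $\sup_{F_p}|\nabla^i\partial_t g_{t,p}|_{g_{t,p}}\le C_i'\,e^{kt/4}$ for every $i$. Since $k<0$, each integral $\int_0^\infty\sup_{F_p}|\nabla^i\partial_t g_{t,p}|_{g_{t,p}}\,dt$ is finite, so by the cited criterion $(g_{t,p})_t$ converges in $\mathcal{C}^\infty(F_p)$ to a smooth metric $g_{\infty,p}$. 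Because the Gauss curvature is continuous in the $\mathcal{C}^2$ topology on metrics, the bound $|K_{g_{t,p}}-k|\le C_0 e^{kt}$ then forces $K_{g_{\infty,p}}\equiv k$, so $g_{\infty,p}$ has constant curvature.

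\textbf{Step 2 (uniformity and smoothness in $p$).} All constants above are uniform over the compact base, so the convergence in Step 1 occurs at a rate independent of $p$ in every $\mathcal{C}^k$-norm; what remains is to see that $p\mapsto g_{\infty,p}$ is smooth. I would work in a local trivialization $f^{-1}(U)\cong U\times F$, in which NFRF becomes a $U$-parametrized family of ordinary normalized Ricci flows on $F$ with $U$-smooth initial data; the smooth-category version of the Bahuaud--Guenther--Isenberg short-time theory (with its smooth dependence on parameters) then gives smoothness of $g_{t,p}$ in $(t,p)$ on every finite time interval. To reach $t=\infty$ I would differentiate the flow equation in the $U$-directions: a multi-index derivative $\partial_p^\alpha g_{t,p}$ solves a linear parabolic equation on $F$ whose coefficients involve $g_{t,p}$, $K_{g_{t,p}}$, their covariant derivatives and the lower-order derivatives $\partial_p^\beta g_{t,p}$ ($|\beta|<|\alpha|$), all of which decay like $e^{kt/4}$ by Step 1. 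Plugging this into the maximum principle (Proposition \ref{maxprinc}) exactly as in the preceding lemmas --- with the base point of each application chosen uniformly over $p\in U$, once more thanks to the established uniformity --- should yield, by induction on $|\alpha|$, that $\int_0^\infty\sup_F|\nabla^i\partial_t\partial_p^\alpha g_{t,p}|\,dt<\infty$. Each $\partial_p^\alpha g_{t,p}$ then converges in $\mathcal{C}^\infty(F)$, so $g_{\infty,p}$ is smooth in $p$; patching over a finite cover of $S$ produces the smooth family $g_{\infty,s}$, each member of constant curvature by Step 1.

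\textbf{Expected obstacle.} The genuinely new content is Step 2: the previous lemmas bound only vertical derivatives of the flow, so the base-direction estimates must be generated by hand. I expect the technical heart to be the bookkeeping for the linearized flow --- verifying that differentiating NFRF in the parameter produces a parabolic equation with a decaying inhomogeneity to which the maximum principle applies, and that the estimates can be iterated in $|\alpha|$ while all constants stay uniform over $S$. One subtlety to watch is that the inhomogeneous term of the $\partial_p^\alpha$-equation contains $\partial_p^\alpha K_{g_{t,p}}$, which is a second-order vertical operator on $\partial_p^\alpha g_{t,p}$ itself together with lower-order pieces, so the induction must be arranged so that the top-order part is absorbed into the parabolic operator rather than treated as forcing.
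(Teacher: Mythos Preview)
Your proposal is correct and follows the paper's approach; in fact the paper's own proof is a single sentence stating that one repeats the single-surface argument word-by-word, using smoothness of the initial data in $p$ and the uniformity of the curvature estimates established in the preceding lemmas. Your Step~1 is exactly this, and your Step~2 --- linearizing the flow in the base direction and iterating maximum-principle bounds on $\partial_p^\alpha g_{t,p}$ --- actually supplies more detail for the $p$-smoothness at $t=\infty$ than the paper itself provides.
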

\begin{proof}
    The proof consists of repeating the single-surface case word-by-word, using the smoothness of the initial data $g_{0,p}$ in the $p$-variable and the universality of curvature estimates found in the previous lemmas.
\end{proof} 
This proposition essentially concludes the proof of theorem \ref{NFRF}. We are now ready to combine Lemma \ref{slicing} and Theorem \ref{NFRF} to conclude on the existence of classifying maps\thinspace:
\begin{thm}\label{thm:main1}
    Let $f:X\to S$ be an achiral Lefschetz fibration of fiber genus $g\geq 4$. Denote $p_1,\dots,p_n$ its critical values and $\tau_1,\dots,\tau_n$ corresponding monodromies. Then there exists a classifying map $\phi_f: S\to \dm_g$, which is unique up to a choice of slicing metrics for $\tau_1,\dots,\tau_n$ and isotopy of $\phi_f|_{\sreg}: \sreg\to \m_g$. \par 
    Moreover, if $f$ does not have $(1,g-1)$-type separating singularities, then the image of $\phi_f$ is contained in $\dm_g^o$, the automorphism-free locus of $\dm_g$ and $\phi_f^*(\scr_g) \simeq X$.
\end{thm}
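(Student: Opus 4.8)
The plan is to build $\phi_f$ piece by piece over the decomposition $S=\sreg\cup\ssing$, using Lemma \ref{slicing} near the critical values and Theorem \ref{NFRF} on the regular part, and then to glue the two pieces. First I would fix slicing metrics $g_{\tau_1},\dots,g_{\tau_n}$ for the monodromies once and for all and invoke Lemma \ref{slicing} to obtain a global Riemannian metric $g$ on $X$ which coincides, on a fibered disk neighbourhood of each singular fiber, with the prescribed slicing metric; shrinking the disks $D_i,D'_i$ if necessary, we may assume $\overline{D_i}$ lies inside this region. Over $\ssing$ the classifying map is then essentially tautological: by the definition of a slicing metric and the accompanying ``slicing'' picture, $f^{-1}(D_i)$ is identified with the pullback of $\scr_g$ along an embedded disk $\iota_i\colon D_i\hookrightarrow\dm_g$ meeting $\sdiv_g$ transversely in the single point $\iota_i(p_i)$, and I set $\phi_f|_{D_i}:=\iota_i$.

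Over $\sreg$, where $f$ restricts to an honest surface bundle with closed fibers of genus $g\ge 2$, I would run the normalized fibered Ricci flow of Theorem \ref{NFRF} with initial data $\{g|_{F_p}\}_{p\in\sreg}$. By that theorem the flow converges to a smooth family $g_{\infty,p}$ of metrics of constant curvature $-1$; in real dimension two a Riemannian metric determines a conformal, hence a complex, structure on the fiber, so $p\mapsto[F_p,g_{\infty,p}]$ defines a map $\sreg\to\m_g$, which is smooth because $g_{\infty,p}$ is smooth in $p$ and because the passage from a hyperbolic metric to its point of $\m_g$ is smooth for the orbifold structure of $\m_g$ (Teichm\"uller theory). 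The two constructions agree on the overlap $\sreg\cap\ssing$: there every fiber is already a closed hyperbolic surface (this is precisely the slicing condition), and constant-curvature metrics satisfy $\partial_t g=(k-K_g)g\equiv 0$ and so are stationary for the NFRF, whence $[F_p,g_{\infty,p}]=[F_p,g|_{F_p}]=\iota_i(p)$ on the overlap. Consequently the two pieces glue to a smooth map $\phi_f\colon S\to\dm_g$ with $f^{-1}(D_i)\cong\iota_i^*\scr_g$ and $\xreg\cong(\phi_f|_{\sreg})^*\uc_g$, i.e.\ a classifying map for $f$.

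For uniqueness I would use that, with the slicing metrics fixed, the space of admissible metrics $g$ is contractible by Lemma \ref{slicing}, and that the solution of the NFRF depends continuously on its initial data (the \emph{a priori} estimates in the proof of Theorem \ref{NFRF} are uniform over compact families of initial metrics). A path $g^s$ of admissible metrics then yields a path of limit families $g^s_{\infty,p}$, hence an isotopy of $\phi_f|_{\sreg}$ inside $\m_g$, while $\phi_f$ remains literally unchanged on the $D_i$; since near each $p_i$ the map always crosses $\sdiv_g$ transversely in a single point, this isotopy preserves the geometric intersection number of $S$ with $\sdiv_g$. The dependence on the choice of slicing metrics is built into the statement.

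Finally, for the two closing assertions I would show that, if no vanishing cycle is separating of type $(1,g-1)$, then every fiber of $f$ represents an automorphism-free stable curve, so that $\phi_f(S)\subseteq\dm_g^o$. The singular fiber over $p_i$ is the stable curve obtained by collapsing $\gamma_i$; a genus-one component is the only unavoidable source of automorphisms among such degenerations, and forbidding $(1,g-1)$-separating vanishing cycles excludes exactly that case, the remaining degenerations (irreducible nodal, or reducible of type $(h,g-h)$ with $2\le h\le g-2$) being automorphism-free for $g\ge 4$. Once $\phi_f(S)\subseteq\dm_g^o$, the restriction of $\scr_g$ there is an honest family of stable curves carrying the fine-moduli universal property (Theorem 12.1 of \cite{HUbbardKoch}), and since $X\to S$ is a family of stable curves whose induced map is $\phi_f$, pulling back yields $\phi_f^*(\scr_g)\simeq X$. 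I expect the genuine difficulty to lie in the regular fibers rather than the singular ones: for $g=4$ the hyperelliptic locus has codimension only two in $\m_g$, so a surface cannot be pushed off it by a generic perturbation alone, and one must instead exploit the particular geometry of the construction --- or the size of the subgroup of $\mo_g$ generated by $\tau_1,\dots,\tau_n$ --- to exclude regular fibers with nontrivial automorphisms.
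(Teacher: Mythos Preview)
Your construction of $\phi_f$ and your uniqueness argument are essentially identical to the paper's: choose slicing metrics, extend via Lemma \ref{slicing}, run NFRF on $\sreg$, observe stationarity on the overlap, and glue. The paper adds two small technical points you omit --- it chooses the slicing metrics themselves to land in the automorphism-free part of the boundary (possible by a dimension count, except on the genus-one component of a $(1,g-1)$ degeneration), and it fixes a compatible $H_1(F,\mathbb{Z}/2\mathbb{Z})$ marking to make the map smooth in the orbifold sense --- but the architecture is the same.

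The one genuine issue is in your final paragraph. Your worry that for $g=4$ ``the hyperelliptic locus has codimension only two'' and hence cannot be avoided by a generic real surface is a dimension-count error: the hyperelliptic locus has \emph{complex} codimension $g-2$ in $\m_g$, i.e.\ \emph{real} codimension $2(g-2)$, and the same holds for the full locus of curves with nontrivial automorphisms. For $g\ge 4$ this is real codimension at least $4$, so a generic real $2$-dimensional family misses it. This is exactly what the paper does: after fixing automorphism-free slicing metrics on $\ssing$, it observes that a \emph{generic} extension $g_0$ in the contractible space provided by Lemma \ref{slicing} is conformally equivalent, on every regular fiber, to an automorphism-free hyperbolic metric, precisely because $2(g-2)\ge 4$. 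No appeal to the monodromy subgroup or to special geometry of the construction is needed; the transversality argument you thought would fail is in fact the whole story, and it is the reason for the hypothesis $g\ge 4$.
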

\begin{proof}
    We start by choosing slicing metrics $g_{\tau_i}$ corresponding to families of automorphism-free hyperbolic metrics on the disk $D_i$ for each singular value $p_i$, which is not of separating type $(1,g-1)$ (this is always possible due to simple dimension count). If multiple critical values have the same monodromy, we can choose different slicing metrics to ensure injectivity of the singular part $\ssing$. If the singular value $p_i$ corresponds to a $(1,g-1)$-type separating singularity, then we choose a slicing metric which gives rise to automorphism-free curves on $D_i\setminus\{p_i\}$ and on the genus $g-1$-component of the singular fiber, genus $1$ component of the singular fiber only admits hyperelliptic involution as a non-trivial automorphism (recall that $(1,g-1)$-type separating singularity divisor is isomorphic to $\dm_{1,1}\times \dm_{g-1,1}$). For technical reasons we need to choose a basis of $H_1(F, \mathbb{Z}/2\mathbb{Z})$ and a basis of $H_1(F_{p_i},\mathbb{Z}/2\mathbb{Z})$, which are compatible with monodromy and degeneration on the disk $D_i$ to ensure smoothness in the orbifold sense (such choice always exists as separating singularities act trivially on the $H_1$). %todo what to actually put here to fix this?
    \par 
    The lemma \ref{slicing} ensures that any such choice can be extended to a complete metric $g_0$ on $X$, which is unique up to isotopy. Moreover, as the codimension of metrics which are conformally equivalent to a hyperbolic structure with non-trivial automorphisms is $2(g-2)$, for $g\geq 4$ a generic extension $g_0$ is conformally equiavelent to an automorphism-free hyperbolic metric on each fiber. \par 
    Now consider the NRFR with initial condition given by $g_{0,p}:= g_0|_{F_p}$ for $p\in \sreg$ and normalization $k=-1$. The compactness of $\sreg$ and fiber genus hypothesis imply the existence of the solution $g_{t,p}$ to NFRF and its convergence to a constant curvature family of metrics $g_{\infty, p}$ which is smooth in the $p$-variable. Moreover, as NFRF preserves the constant $-1$ curvature metrics, it will not modify the slicing metrics on the overlap $\ssing\cap \sreg$, and thus the family defined by
    $$g_{p}=\begin{cases}
        g_{\infty,p} &\text{for }s\in \sreg \\
        g_{\tau_i} &\text{for }s\in D_i\subseteq \ssing
    \end{cases}$$
    is a smooth family of constant negative curvature Riemannian metrics on $\Sigma_g$. This is sufficient to define a smooth classifying map $\phi_f : S\to \dm_g$. Moreover, we can always arrange this map to be locally injective by perturbing the choice of initial metrics $g_0$ if needed.
\end{proof}
\begin{rem}
    This theorem can be generalized to genera 2 and 3, however, a more careful approach is required, either by rigidifying the moduli space (see \cite{Caporaso}), or by passing to finite covers (which corresponds to introduction of additional structure on the fibers).
\end{rem}
\section{Signature of achiral Lefschetz fibrations}\label{section:signature}
One of the main goals of the classifying map construction is to extract invariants of $X$. It turns out that the signature of $X$ can be easily calculated using the cohomology of the moduli space $\dm_g$\thinspace:
\begin{thm}\label{thm:main2}
    Let $f:X\to S$ be an achiral Lefschetz fibration of the fiber genus $g\geq 4$ with a classifying map $\phi_f : X\to S$. Then we have\thinspace:
    $$\sigma(X) = \langle \hat\sigma, [\phi_f]\rangle$$
    where $\hat{\sigma} = 4c_1(\lambda)-PD(\sdiv_g)\in H^2(\dm_g,\mathbb{Q})$. 
\end{thm}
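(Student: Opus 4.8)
The plan is to decompose $\sigma(X)$, via Novikov additivity, into the signature of the honest surface bundle over the regular locus — computed by Meyer's signature cocycle $\tau_g$ — plus explicit contributions of the standard nodal degenerations over the critical values, the index theorem entering only through the existence and formal properties of $\tau_g$. I work rationally throughout and first reduce to honest manifolds: Theorem \ref{thm:main1} provides a smooth classifying map $\phi_f$ which, for $g\ge 4$ away from $(1,g-1)$-type separating singularities, lands in the smooth locus $\dm_g^o$ and meets $\sdiv_g$ transversally at $\{p_1,\dots,p_n\}$; a $(1,g-1)$-type singularity is absorbed after passing to the $H_1(\cdot;\mathbb{Z}/2)$-level structure used in the proof of Theorem \ref{thm:main1}, which multiplies both $\sigma$ and $\langle\hat\sigma,[\phi_f]\rangle$ of the associated finite cover by the same degree and so leaves the identity unchanged.

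Next, choose disjoint closed slicing disks $D_i'\subseteq D_i$ about the critical values, so that $S=\sreg\cup\bigsqcup_i D_i'$ with $f|_{\xreg}$ a $\Sigma_g$-bundle over $\sreg$ and $\phi_f|_{\sreg}$ valued in $\m_g$. Novikov additivity gives $\sigma(X)=\sigma(\xreg)+\sum_i\sigma(f^{-1}(D_i'))$. Each $f^{-1}(D_i')$ is $\Sigma_g\times D^2$ with a single Lefschetz $2$-handle attached along $\gamma_i$ with fibre framing $-1$ (chiral) or $+1$ (achiral), and a Mayer--Vietoris computation of $H_2$ and of its (possibly degenerate) intersection form shows that $\sigma(f^{-1}(D_i'))=0$ when $\gamma_i$ is non-separating — then $[\gamma_i]\ne 0$, no new $2$-class appears, and the fibre class squares to zero — while $\sigma(f^{-1}(D_i'))=-\epsilon_i$ when $\gamma_i$ is separating, a new sphere-like class appearing, orthogonal to the fibre class and with self-intersection equal to the handle framing, where $\epsilon_i=+1$ for chiral and $\epsilon_i=-1$ for achiral singularities.

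It remains to compute $\sigma(\xreg)$. The monodromy of $f$ yields a factorization $\prod_j[\alpha_j,\beta_j]\prod_i\tau_{\gamma_i}^{\epsilon_i}=1$ in $\mo_g$, and evaluating the Meyer cocycle $\tau_g$ (see \cite{SignatureofFibreBundles}) along a reduction of this word to the identity computes $\sigma(\xreg)$ — this is Meyer's theorem for surface bundles over a surface with boundary. Since $\m_g$ is a rational classifying space for $\mo_g$ and, as a classical consequence of the Atiyah--Singer signature theorem, $[\tau_g]=4c_1(\lambda_g)$ in $H^2(\mo_g;\mathbb{Q})\cong H^2(\m_g;\mathbb{Q})$ — equivalently $\sigma=\tfrac13 p_1$ together with the Mumford relation $12\lambda_g=\kappa_1+\sdiv_g$ and $\sdiv_g|_{\m_g}=0$ — this cocycle evaluation is the evaluation of $4c_1(\lambda_g)$ on the closed surface $S$ up to a correction localized at the boundary letters $\tau_{\gamma_i}^{\epsilon_i}$. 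Collecting these boundary corrections with the local signatures $\sigma(f^{-1}(D_i'))$, one shows the net contribution of each critical value is exactly $-\epsilon_i$; together with the fact that the slicing disk $D_i'$ meets $\sdiv_g$ transversally with local intersection sign $\epsilon_i$ (a right-handed Dehn twist winds positively around $\sdiv_g$, a left-handed one negatively), so that $\langle PD(\sdiv_g),[\phi_f]\rangle=\sum_i\epsilon_i$, this yields
$$\sigma(X)=\big\langle 4c_1(\lambda_g),[\phi_f]\big\rangle-\sum_i\epsilon_i=\big\langle 4c_1(\lambda_g)-PD(\sdiv_g),[\phi_f]\big\rangle=\langle\hat\sigma,[\phi_f]\rangle.$$

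The main obstacle is exactly this last reconciliation: Meyer's theorem is cleanest over a closed base, and passing to the bounded base $\sreg$ with prescribed boundary monodromies $\tau_{\gamma_i}^{\epsilon_i}$ forces one to pin down the per-critical-value correction precisely. I expect the cleanest route is to note that both sides of the identity are additive under gluing along fibred boundary $3$-manifolds and then to cap each boundary circle of $\sreg$ by a genuine surface bundle realising $\tau_{\gamma_i}^{\epsilon_i}$ as a product of commutators (possible since $\mo_g$ is perfect for $g\ge 3$), reducing to two models: a smooth holomorphic family of stable curves over a closed Riemann surface, where the formula is the classical Chern-number identity for curve fibrations on algebraic surfaces, and a single standard nodal degeneration, where the contribution is $-\epsilon_i$ on both sides. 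Carrying this reduction out, together with the orbifold bookkeeping at $(1,g-1)$-type singularities, is the technical heart of the argument.
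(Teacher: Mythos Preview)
Your approach is correct and follows essentially the same skeleton as the paper's: Novikov additivity, explicit local signatures at the critical values ($0$ for non-separating, $-\epsilon_i$ for separating), Meyer's cocycle for $\sigma(\xreg)$, the identification $[\tau_g]=4c_1(\lambda)$ in $H^2(\m_g;\mathbb{Q})$, and a final reduction to the projective/holomorphic case to fix the remaining constant (Smith's Corollary~4.6 in the paper, your ``classical Chern-number identity for curve fibrations on algebraic surfaces'').

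The one technical divergence is in isolating the per-critical-value boundary correction. The paper writes a cochain-level decomposition $\tau_g=\iota^*\alpha+d\beta$ with $[\alpha]=4c_1(\lambda)$ on $\dm_g$, applies Stokes to turn the $d\beta$ term into a sum over the boundary circles $\phi_f(\partial\sreg)$, and then observes that since $\tau_g$ factors through $Sp_{2g}$ the evaluation of $\beta$ on separating Dehn twists vanishes --- this immediately localizes the unknown correction to the non-separating divisor as $C'_g\cdot\langle PD(\Delta_{ns}),[\phi_f]\rangle$, and a single appeal to the projective case gives $C'_g=-1$. Your capping-off argument via perfectness of $\mo_g$ reaches the same conclusion and is a legitimate alternative, though it entails more bookkeeping (two model computations rather than one, plus the orbifold cover for $(1,g-1)$-type singularities that the paper avoids by its choice of slicing metrics). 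The $Sp_{2g}$-factorization shortcut is worth noting: it explains structurally why the separating local signature $-\epsilon_i$ already accounts for the full separating contribution without any further boundary term, something your sketch asserts but does not isolate.
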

\begin{rem}
\begin{enumerate}
    \item We use rational coefficients to be able to use the Poincaré duality. 
    \item There is a subtle point in distinguishing homotopy of $\dm_g$ seen as an orbifold and seen as a topological space. We always consider the orbifold homotopy type of this space, which is justified as the classifying map can be chosen to stay in the subspace $\dm_g^o$ of regular points, which has the same homology as the orbifold $\dm_g$ up to degree $2g-4$.
\end{enumerate}
\end{rem}
This formula is a generalization of the following result by Smith, see \cite{Smith1}\thinspace:
\begin{thm}[Smith]
    Let $f:X\to S$ be a Lefschetz fibration giving rise to a classifying map $\phi_f: S\to \dm_g$. Then we have 
    $$\sigma(X) = \langle4c_1(\lambda), [\phi_f]\rangle - \delta$$
    where $\delta$ is the number of vanishing cycles of $f$.
\end{thm}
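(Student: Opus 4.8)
The plan is to deduce the statement directly from Theorem~\ref{thm:main2}, which has already been established in the broader achiral setting. A (chiral) Lefschetz fibration is in particular an achiral Lefschetz fibration, so Theorem~\ref{thm:main2} applies verbatim and gives
$$\sigma(X) = \langle \hat\sigma, [\phi_f]\rangle = \langle 4c_1(\lambda), [\phi_f]\rangle - \langle PD(\sdiv_g), [\phi_f]\rangle.$$
It therefore suffices to identify the correction term $\langle PD(\sdiv_g), [\phi_f]\rangle$ with the number $\delta$ of vanishing cycles. By definition of the Poincaré dual, this pairing is the signed geometric intersection number $[\phi_f(S)]\cdot[\sdiv_g]$ computed inside the complex orbifold $\dm_g$, which is oriented and along whose node-divisor $\sdiv_g$ we have a natural complex co-orientation. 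By the construction in Theorem~\ref{thm:main1}, $\phi_f$ can be chosen to meet $\sdiv_g$ transversely and only inside the slicing disks $D_i$, with a single intersection point per critical value $p_i$. Hence the pairing is a sum $\sum_{i=1}^n \epsilon_i$ of local intersection signs, one per critical value, and everything reduces to computing each $\epsilon_i$.

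The heart of the argument is this local sign computation, and it is here that the chiral hypothesis enters. First I would recall the Picard--Lefschetz description of $\dm_g$ near a generic point of the node-divisor: a small loop that winds once, positively with respect to the complex orientation, around $\sdiv_g$ realizes precisely a \emph{right-handed} Dehn twist along the vanishing cycle. For a chiral singularity the holomorphic local model $(z,w)\mapsto zw$ forces the boundary monodromy around $p_i$ to be exactly the right-handed twist $\tau_{\gamma_i}$. Matching the boundary monodromies, the slicing disk $D_i$ must wind positively around $\sdiv_g$, so that $\phi_f$ crosses the divisor in agreement with its complex co-orientation and $\epsilon_i = +1$. By contrast, the achiral model $(z,w)\mapsto \bar z w$ reverses orientation and yields a left-handed twist, hence a negatively oriented loop and $\epsilon_i = -1$; this single sign flip is the sole origin of the discrepancy between the honest count here and the signed count in the general formula.

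Summing over all critical values then gives $\langle PD(\sdiv_g),[\phi_f]\rangle = \sum_{i=1}^n \epsilon_i = n$. Since $f$ has been perturbed to be injective on $Crit$ (as arranged in the Preliminaries), each critical value carries a single node and therefore a single vanishing cycle, so $n$ equals the total number $\delta$ of vanishing cycles. Substituting back yields $\sigma(X) = \langle 4c_1(\lambda),[\phi_f]\rangle - \delta$, as claimed.

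The step I expect to be the main obstacle is making the local sign computation fully rigorous in the orbifold category. One must carefully align three orientation conventions---the orientation of the base $S$ (and hence of each $D_i$), the complex orientation of $\dm_g$ near $\sdiv_g$, and the sign of the Dehn twist dictated by the holomorphic model---and verify that the Picard--Lefschetz identification pins down the co-orientation with the correct handedness. One must also check that the orbifold isotropy at the divisor does not corrupt the local intersection multiplicity, which is where the passage to rational coefficients and to the regular locus $\dm_g^o$, as flagged in the remark following Theorem~\ref{thm:main2}, becomes essential. Once these conventions are reconciled, the positivity of complex transverse intersections closes the argument.
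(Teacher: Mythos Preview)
Your proposal is correct: specializing Theorem~\ref{thm:main2} to the chiral setting and verifying that every local intersection sign is $+1$ via the Picard--Lefschetz description is a valid and clean way to recover Smith's formula, and your discussion of the orientation bookkeeping is on point.

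The paper, however, does not argue in this direction. It treats Smith's theorem as an already known result (proved by Smith using Novikov additivity together with the Atiyah--Singer index theorem for the family signature operator) and then gives a direct proof of the more general Theorem~\ref{thm:main2} via the Meyer signature cocycle: it computes $\sigma(\xreg,\partial\xreg)$ as a pairing with $\tau_g$, rewrites $\tau_g$ as $\iota^*\alpha + d\beta$ with $[\alpha]=4c_1(\lambda)$, converts the $d\beta$ term into a boundary contribution depending only on monodromy conjugacy classes, and finally fixes the remaining undetermined constant $C'_g$ by invoking Smith's projective-fibration computation (Lemma~\ref{SmithsProjFibr}). So your argument and the paper's run in opposite logical directions: you deduce the chiral statement as a corollary of the achiral one, whereas the paper proves the achiral statement from scratch and cites the chiral one only as motivation and as a source for the calibrating Lemma~\ref{SmithsProjFibr}. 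Your route is the shortest path once Theorem~\ref{thm:main2} is available; the paper's route has the advantage of being a self-contained alternative to Smith's original argument that sidesteps the direct use of the index theorem. One minor caveat: in the paper's textual order the proof of Theorem~\ref{thm:main2} comes \emph{after} the statement you are proving, so ``already established'' should be understood logically rather than sequentially.
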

The original proof can be adapted without any major changes. Smith uses Novikov additivity to cut the base into regular and singular parts. The only difference between the chiral and achiral cases appears when evaluating signature of the singular part, which gives exactly the signed intersection count part. The signature of the regular part is then calculated using the Atiyah-Singer generalization of the Hirzebruch signature theorem. \par 
We present here another simple proof which has the advantage of not using the index theorem directly, and thus can potentially be more accessible. This proof resembles the approach of Endo in \cite{Endo} and the initial considerations of Meyer in \cite{Meyer}. We start by recalling the definition of signature cocycle $\tau_g$\thinspace:
\begin{defin}
    Let $S$ be a three-holed sphere (or a pair of pants), and $X$ be an oriented $\Sigma_g$-bundle over $S$ with boundary monodromies given by $\alpha, \beta, \beta^{-1}\alpha^{-1} \in \mo_g$. The function 
    $$\tau_g : \mo_g \times \mo_g \to \mathbb{Z}$$
    defined as $\tau_g(\alpha, \beta):= \sigma(X, \partial X)$ is called the Meyer signature cocycle.
\end{defin}
\begin{thm} [\cite{Meyer}]
    The signature cocycle $\tau_g$ is well defined. Moreover, its action extends to all 2-chains in the following sense\thinspace: for $c\in C_2(B\mo_g, \mathbb{Z})$ a singular 2-chain, choose a surface $S$ (possibly with boundary) representing it. Then if $S = \cup S_i$ is a decomposition of $S$ into pairs of pants, we have $\tau_g(S) = \sum \tau_g(S_i)$. This formula is independent of the choice of a pants decomposition as 
    $$\tau_g(S) = \sum \tau_g(S_i) = \sum \sigma(X_i, \partial X_i) = \sigma(X, \partial X)$$
    For surfaces not admitting a pair of pants decomposition ($S^2, \Sigma_0^1,\Sigma_0^2, T$) we extend by $0$.
\end{thm}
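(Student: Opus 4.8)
The plan is to reduce every assertion of the theorem to the classical Novikov additivity of the signature, invoking the index theorem only in the weak form already packaged into the invariance of $\sigma(X,\partial X)$, and otherwise using nothing beyond the classification of surface bundles over low-complexity bases. First I would establish well-definedness of $\tau_g(\alpha,\beta)$. Since $g\geq 2$, the theorem of Earle--Eells gives that the identity component of $\mathrm{Diff}(\Sigma_g)$ is contractible, so $B\mathrm{Diff}(\Sigma_g)\simeq B\mo_g$ is a $K(\mo_g,1)$. Consequently an oriented $\Sigma_g$-bundle over the pair of pants $P=\Sigma_0^3$ is classified up to isomorphism by its monodromy homomorphism $\pi_1(P)=\langle x,y\rangle\to\mo_g$; fixing the two boundary monodromies $x\mapsto\alpha$, $y\mapsto\beta$ (whence the third boundary loop carries $\beta^{-1}\alpha^{-1}$) determines this homomorphism, and hence the bundle, uniquely. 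Because $\sigma(X,\partial X)$ is an invariant of the oriented homeomorphism type of the pair $(X,\partial X)$, it depends only on $(\alpha,\beta)$, which is exactly the asserted well-definedness.

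The heart of the extension statement is Novikov additivity. Given a surface $S$ carrying a $\Sigma_g$-bundle $f:X\to S$ and a pants decomposition $S=\bigcup_i S_i$ along a system of pairwise disjoint interior simple closed curves $c_j$, cutting $X$ along the preimages $f^{-1}(c_j)$ decomposes it into the pieces $X_i=f^{-1}(S_i)$. Each cut locus $f^{-1}(c_j)$ is the mapping torus of $\Sigma_g$ over $c_j$, a closed oriented $3$-manifold lying in the interior of $X$ and disjoint from $\partial X$. Cutting one curve at a time, every cut is therefore along a closed two-sided hypersurface meeting no boundary, so the Wall non-additivity correction vanishes and classical Novikov additivity gives $\sigma(X,\partial X)=\sum_i\sigma(X_i,\partial X_i)$. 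By the first paragraph each $\sigma(X_i,\partial X_i)$ equals $\tau_g$ of the boundary monodromies of $S_i$, which yields $\tau_g(S):=\sum_i\tau_g(S_i)=\sigma(X,\partial X)$.

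Independence of the pants decomposition is then immediate, since the right-hand side $\sigma(X,\partial X)$ is intrinsic to the bundle and refers to no decomposition; any two decompositions therefore produce the same sum. The cocycle identity $\tau_g(\beta,\gamma)-\tau_g(\alpha\beta,\gamma)+\tau_g(\alpha,\beta\gamma)-\tau_g(\alpha,\beta)=0$ is the special case of this invariance for the four-holed sphere $\Sigma_0^4$ with boundary monodromies $\alpha,\beta,\gamma,(\alpha\beta\gamma)^{-1}$, cut into two pairs of pants in its two topologically distinct ways and equated via additivity. For the four exceptional surfaces $S^2,\Sigma_0^1,\Sigma_0^2,T$, namely those with $\chi\geq 0$ that admit no pants decomposition, one sets $\tau_g=0$; this value is forced for consistency, as gluing in such a piece must leave every signature computed by the additivity formula unchanged.

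I expect the main obstacle to be the careful verification that no Wall non-additivity term intrudes: one must arrange the decomposing curves to be pairwise disjoint and interior so that, cutting successively, each hypersurface $f^{-1}(c_j)$ remains closed and disjoint from the boundary produced by the earlier cuts. A secondary technical point is the passage from an abstract singular $2$-chain $c\in C_2(B\mo_g,\mathbb{Z})$ to an honest surface-with-boundary $S$ equipped with a bundle $X\to S$ representing it, and the check that homologous chains yield equal signatures; this last independence is again a consequence of Novikov additivity together with the cobordism invariance of the signature, so that the extension $\tau_g(S)$ descends to a well-defined functional on $C_2(B\mo_g,\mathbb{Z})$.
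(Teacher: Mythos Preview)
The paper does not supply its own proof of this theorem; it is quoted as a result of Meyer with a citation and then used as a black box in the proof of Theorem~\ref{thm:main2}. Your sketch is essentially the standard argument (and indeed Meyer's): well-definedness via Earle--Eells contractibility of $\mathrm{Diff}_0(\Sigma_g)$ so that the bundle over the pair of pants is determined by the monodromy pair, the extension and decomposition-independence via Novikov additivity along the closed mapping tori $f^{-1}(c_j)$, and the cocycle identity from the two pants decompositions of $\Sigma_0^4$. The reasoning is correct. The one place a reader might want an extra line is the value $0$ on the exceptional bases: rather than ``forced for consistency'', you can simply observe that a $\Sigma_g$-bundle over $\Sigma_0^1$ is trivial and over $\Sigma_0^2$ is a mapping torus crossed with an interval, both of signature $0$, and then decompose $S^2$ and $T$ into two such pieces and apply Novikov additivity once more.
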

\begin{rem}
    The function $\tau_g$ is called a cocycle because it is a cocycle in the classifying space cohomology $H^2(B\mo_g, \mathbb{Z})$ (defined by its action on chains using the universal coefficient theorem). \par 
    Remark also that this class can be thought of as initially constructed on $BSp_{2g}$ and then pulled back via morphism $\mo_g \to Sp_{2g}$.
\end{rem}
Now that we know how to calculate the signature of the regular part $\xreg$ using the Meyer cocycle, we want to tie this calculation with the Chern class of the Hodge bundle\thinspace:
\begin{lem}
    Let $\tau_g \in C^2(B\mo_g, \mathbb{Q})$ be the image of the integral Meyer cocycle $\tau_g$. Then there exist a 2-cocycle $\alpha \in C^2(\dm_g, \mathbb{Q})$ and a 1-cochain $\beta \in C^1(\m_g, \mathbb{Q})$ such that\thinspace:
    \begin{align*}
        &[\alpha] = 4c_1(\lambda) \in H^2(\dm_g, \mathbb{Q}) \\
        &\langle c, \iota^*\alpha + d\beta\rangle = \sigma(M_c,\partial M_c),  \forall c \in C_2(\m_g,\mathbb{Q}),
    \end{align*}
    where $\iota^*$ is the restriction morphism induced by the inclusion $\iota : \m_g \to \dm_g$, and $M_c$ is a fiber bundle above a surface $S_c$ representing the 2-chain $c$.
\end{lem}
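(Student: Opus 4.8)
\emph{Plan.} The idea is to feed Meyer's computation of the cohomology class of $\tau_g$ into the fact, recalled in the preliminaries, that $c_1(\lambda)$ extends over $\dm_g$: the cocycle $\alpha$ will be any singular representative of $4c_1(\lambda)$ on $\dm_g$, and $\beta$ will be a primitive of the difference $\tau_g-\iota^*\alpha$ on $\m_g$, which exists precisely because the two are cohomologous there. First I would transport $\tau_g$ to $\m_g$. Teichm\"uller space $T_g$ is contractible and $\mo_g$ acts on it properly with finite stabilizers, so $\m_g$ is a rational classifying space for $\mo_g$: there is a canonical isomorphism $H^*(\m_g;\mathbb{Q})\cong H^*(\mo_g;\mathbb{Q})=H^*(B\mo_g;\mathbb{Q})$, and a singular $2$-chain $c\in C_2(\m_g;\mathbb{Q})$ represented by an oriented surface $S_c\to\m_g$ carries, after pullback of $\uc_g$, an associated $\Sigma_g$-bundle $M_c\to S_c$ whose monodromy yields a map $S_c\to B\mo_g$. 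Using this, regard $\tau_g$ as a rational singular $2$-cocycle $\tau_g^{\m}\in C^2(\m_g;\mathbb{Q})$. By the extension of Meyer's theorem to $2$-chains quoted above, $\langle c,\tau_g^{\m}\rangle=\sigma(M_c,\partial M_c)$ for every such $c$, including chains with boundary (decompose $S_c$ into pairs of pants, apply Novikov additivity, and use the zero convention on the pieces of nonnegative Euler characteristic).

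\emph{The class of $\tau_g$.} By Meyer's computation \cite{Meyer} --- equivalently, by the family signature theorem, which gives $\sigma(E)=4\langle c_1(\lambda),[B]\rangle$ for a closed $\Sigma_g$-bundle $E\to B$ --- one has, in the sign convention of this paper, $[\tau_g^{\m}]=4c_1(\lambda)$ in $H^2(\m_g;\mathbb{Q})$. This is the only point at which the index theorem enters, and it is used only once and through this single cohomological identity.

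\emph{Extraction of $\alpha$ and $\beta$.} Since $c_1(\lambda)$ extends to a class in $H^2(\dm_g;\mathbb{Z})\subseteq H^2(\dm_g;\mathbb{Q})$, I would pick any singular $2$-cocycle $\alpha\in C^2(\dm_g;\mathbb{Q})$ with $[\alpha]=4c_1(\lambda)$; that gives the first assertion. Restricting along $\iota:\m_g\hookrightarrow\dm_g$ gives $[\iota^*\alpha]=\iota^*(4c_1(\lambda))=4c_1(\lambda)=[\tau_g^{\m}]$ in $H^2(\m_g;\mathbb{Q})$, so $\tau_g^{\m}-\iota^*\alpha$ is a rational $2$-cocycle representing $0$ and hence equals $d\beta$ for some $\beta\in C^1(\m_g;\mathbb{Q})$. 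Then for every $c\in C_2(\m_g;\mathbb{Q})$,
\[
\langle c,\iota^*\alpha+d\beta\rangle=\langle c,\tau_g^{\m}\rangle=\sigma(M_c,\partial M_c),
\]
which is the second assertion, and $[\alpha]=4c_1(\lambda)$ by construction.

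\emph{Main obstacle.} The only genuinely nonformal ingredient is the identification $[\tau_g]=4c_1(\lambda)$ with the correct normalization and sign; everything else is manipulation of cochains. The secondary point requiring care is that $\langle c,\tau_g^{\m}\rangle$ for a \emph{bounded} chain must compute the \emph{relative} signature $\sigma(M_c,\partial M_c)$ --- this is exactly the content of the extension of $\tau_g$ to $2$-chains, and it is this boundary behaviour that allows $\beta$ to carry precisely the correction term appearing as an eta-invariant in Smith's original argument. One should also be careful to use the isomorphism $H^*(\m_g;\mathbb{Q})\cong H^*(B\mo_g;\mathbb{Q})$ consistently, for instance via the Borel construction, so that "$2$-chain of $\m_g$'' and "$\Sigma_g$-bundle over a surface'' are matched compatibly with the universal property of $\uc_g$.
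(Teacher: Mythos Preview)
Your argument is correct and follows essentially the same route as the paper: transport $\tau_g$ to a cocycle on $\m_g$ via the rational equivalence $H^*(\m_g;\mathbb{Q})\cong H^*(B\mo_g;\mathbb{Q})$, invoke the Meyer--Morita identification $[\tau_g]=4c_1(\lambda)$, choose $\alpha$ representing $4c_1(\lambda)$ on $\dm_g$, and let $\beta$ be a primitive of $\tau_g-\iota^*\alpha$. Your version is in fact more explicit than the paper's about why the pairing on bounded chains returns the relative signature and about the rational-classifying-space identification, but the structure and the one nonformal input are identical.
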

\begin{proof}
    We start by recalling that due to the results of Morita \cite{charclasses} and Meyer \cite{Meyer} we know that $\phi(\tau_g) = 4c_1(\lambda)$ in $H^2(\m_g, \mathbb{Q})$, where $\phi$ is the natural isomorphism in cohomology $\phi : H^*(B\mo_g, \mathbb{Q}) \overset{\simeq}{\to} H^*(\m_g, \mathbb{Q})$. \par 
    Remark now that via its action on 2-chains in $C_2(B\mo_g, \mathbb{Q})$, we can also transport the Meyer signature cocycle $\tilde{\phi}(\tau_g) \in C^2(\m_g, \mathbb{Q})$ in the cohomology of the moduli space using the map $\tilde{\phi} : C^2(B\mo_g, \mathbb{Q}) \to C^2(\m_g, \mathbb{Q})$. \par 
    Finally, let $\alpha \in C^2(\dm_g, \mathbb{Q})$ be a cochain representing the class $4c_1(\lambda) \in C^2(\dm_g, \mathbb{Q})$ in cohomology (remark that here we talk about the extension of $c_1(\lambda)$ to the Deligne-Mumford compactification). This implies that 
    $$[\iota^*\alpha] = 4\iota^*c_1(\lambda) =[\tau_g] \in H^2(\m_g, \mathbb{Q}), $$
    and thus there exists a 1-cochain $\beta \in C^1(\m_g, \mathbb{Q})$ such that $\tau_g = \iota^*\alpha + d\beta$.
\end{proof}
    Lastly, we need a particular case of Smith's theorem to be true to conclude. We use his intermediate result on projective fibrations, which boils down to explicit calculations in the framework of complex algebraic geometry\thinspace:
\begin{lem}[Corollary 4.6 in \cite{Smith1}] \label{SmithsProjFibr}
    Let $f:X \to B$ be a complex Lefschetz fibration of a projective surface, then $$\sigma(X) = \langle 4c_1, [\phi_f] \rangle - \delta, $$
    where $\delta$ is the number of singular fibers.
\end{lem}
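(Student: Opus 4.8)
The plan is to reduce the statement to the Hirzebruch signature theorem together with Noether's formula, after which everything becomes bookkeeping with the numerical invariants of the fibration. For a compact complex surface one has $\sigma(X)=\tfrac13(c_1^2(X)-2c_2(X))$ (Hirzebruch) and $\chi(\mathcal{O}_X)=\tfrac1{12}(c_1^2(X)+c_2(X))$ (Noether); eliminating $c_1^2(X)$ between the two gives the convenient identity $\sigma(X)=4\chi(\mathcal{O}_X)-c_2(X)$. It therefore suffices to evaluate $\chi(\mathcal{O}_X)$ and $c_2(X)=\chi_{\mathrm{top}}(X)$ in terms of $g$, the genus $h$ of $B$, the number of nodes $\delta$, and the degree of the Hodge bundle.

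The Euler characteristic is elementary: $\chi_{\mathrm{top}}(X)=\chi(B)\chi(F)+\delta=(2-2h)(2-2g)+\delta$, since each of the $\delta$ singular fibres of a Lefschetz fibration has Euler characteristic exactly one more than the regular fibre $F$. For $\chi(\mathcal{O}_X)$ I would run the Leray spectral sequence for $f$, giving $\chi(\mathcal{O}_X)=\chi(B,f_*\mathcal{O}_X)-\chi(B,R^1f_*\mathcal{O}_X)$. Connectedness of the fibres yields $f_*\mathcal{O}_X=\mathcal{O}_B$, so the first term is $1-h$. For the second, the nodal fibres of a Lefschetz fibration have arithmetic genus $g$, so $h^1$ of every fibre equals $g$; by cohomology and base change $R^1f_*\mathcal{O}_X$ is locally free of rank $g$, and relative Grothendieck duality identifies its dual with $f_*\omega_{X/B}$, so $\deg R^1f_*\mathcal{O}_X=-\deg f_*\omega_{X/B}=:-\lambda$. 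Riemann--Roch on the curve $B$ then gives $\chi(B,R^1f_*\mathcal{O}_X)=-\lambda+g(1-h)$, hence $\chi(\mathcal{O}_X)=(1-g)(1-h)+\lambda$. Finally the universal property of the moduli space identifies $f_*\omega_{X/B}$ with $\phi_f^*\lambda_g$, so $\lambda=\langle c_1(\lambda),[\phi_f]\rangle$. Substituting into $\sigma(X)=4\chi(\mathcal{O}_X)-c_2(X)$ makes the terms $(1-g)(1-h)$ cancel and leaves precisely $\sigma(X)=4\lambda-\delta$.

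The only input that is not purely formal, and hence the step I expect to require the most care, is the behaviour of the relative dualizing sheaf over the singular fibres: one needs both the local freeness of $R^1f_*\mathcal{O}_X$ (via the constancy of the arithmetic genus of the nodal fibres) and the duality isomorphism $(R^1f_*\mathcal{O}_X)^\vee\cong f_*\omega_{X/B}$ for the non-smooth morphism $f$. An equivalent route applies Grothendieck--Riemann--Roch to $f$ and $\omega_{X/B}$ directly, which reproduces the classical identity $\omega_{X/B}^2=12\lambda-\delta$ and then feeds into $c_1^2(X)=(f^*K_B+K_{X/B})^2=2(2h-2)(2g-2)+\omega_{X/B}^2$; there the delicate point is instead the node-correction term in $\mathrm{td}(T_{X/B})$. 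Either way the argument stays entirely within complex algebraic geometry, as in Smith's original proof.
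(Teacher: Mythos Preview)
Your argument is correct. Note, however, that the paper does not supply its own proof of this lemma: it is quoted verbatim as Corollary~4.6 of Smith's paper and used as a black box, with the only commentary being that the result ``boils down to explicit calculations in the framework of complex algebraic geometry''. Your Hirzebruch--Noether manipulation together with the Leray/Riemann--Roch computation of $\chi(\mathcal{O}_X)$ is exactly such a calculation, and indeed is the standard route to this identity for fibred surfaces; the caveats you flag about local freeness of $R^1f_*\mathcal{O}_X$ and relative duality over the nodal fibres are the right ones and are handled by the constancy of the arithmetic genus in a family of stable curves.
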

    With this result in our hands, we are ready to assemble our proof of Smith's theorem.
\begin{proof}
    We start by calculating the signature of the regular part $\sreg$. From the Meyer's theorem and its adaptation to the moduli space chains, we know that 
    $$\sigma(\xreg, \partial \xreg) = \langle \phi_f(\sreg), \tau_g \rangle.$$
    Coupled with the decomposition from the previous lemma, we can write it as 
    $$\sigma(\xreg, \partial\xreg) = \langle \phi_f(\sreg), \iota^*\alpha\rangle + \langle \phi_f(\sreg) d\beta\rangle.$$
    The first term can be written as 
    $$\langle \phi_f(\sreg), \iota^*\alpha \rangle = \langle \iota \phi_f(\sreg), \alpha\rangle,$$
    and moreover, by choosing $\sreg$ big enough and an appropriate representative $\alpha$, we can arrange so that 
    $$\langle \iota \phi_f(\sreg), \alpha\rangle = \langle \phi_f, \alpha \rangle = \langle [\phi_f] , 4c_1(\lambda)\rangle. $$
    Finally, to calculate the term $\langle \phi_f(\sreg), d\beta\rangle$, we can apply the Stokes theorem to rewrite it as 
    $$\langle \phi_f(\sreg), d\beta\rangle = \langle \phi_f(\partial\sreg), \beta\rangle = \sum \langle[\gamma_i], \beta \rangle,$$
    where $\beta_i$ are the boundary cycles. Note that the terms of the sum only depend on the homology class of the boundaries (which in turn only depends on the conjugacy class of the monodromy). The evaluation of $\beta$ on the cycles associated to monodromies around separating curves are $0$, as they do not act on homology and thus $\tau_g$ does not take them into account (cf. remark after the definition of $\tau_g$ about the pullback of this class from $BSp$). The correction term is therefore only dependent on the signed count of non-separating monodromies, which can be written as 
    $$\langle \phi_f(\partial\sreg), \beta\rangle = C'_g \cdot \langle [\phi_f], PD(\Delta_{ns}) \rangle,$$
    for some constant $C'_g$ depending only on the genus $g$.
    \par 
    Putting this formula together with our signature calculations for the singular part yields 
    $$\sigma(X) = \langle 4c_1(\lambda) - PD(\Delta_s) + C'_g PD(\Delta_{ns}), [\phi_f] \rangle.$$
    Finally, we apply the lemma \ref{SmithsProjFibr} on projective fibrations to conclude that $C'_g = -1$ and thus the result follows.
\end{proof}
\printbibliography
\end{document}